\RequirePackage{fix-cm} % Added by the template.
\pdfoutput=1

% smallextended below is the one used by MPC,  see:
% https://www.springer.com/journal/12532/submission-guidelines
\documentclass[smallextended]{svjour3}       % onecolumn (second format)

% FIX cleverref + svjour3 bug, see:
% https://tex.stackexchange.com/questions/499497/
\makeatletter
\let\cl@chapter\undefined
\makeatletter

\smartqed  % flush right qed marks, e.g. at end of proof

% insert here the call for the packages your document requires
\usepackage{mathptmx} % use the same font the Journal will use if possible
% packages for diagrams and colored links
\usepackage{graphicx} % already present in template, do not remove
\usepackage{multirow}
\usepackage{hyperref}
\usepackage{color}
\usepackage[table]{xcolor}
\definecolor{gray-table-row}{gray}{0.90}
% One specific table uses three shades.
%\definecolor{gray0}{gray}{1.0} % white, but called gray to automatize simpler
%\definecolor{gray1}{gray}{0.95}
%\definecolor{gray2}{gray}{0.90}
% Packages for the orcid logo (hyperref is also needed but included above)
% Disabled for now because only seem to work with xetex or luatex
%\usepackage{academicons}
% Packages for computer code
\usepackage{algorithm}
\usepackage{algpseudocode}
% Package for multiline comments
\usepackage{verbatim}
% Packages for formatting the mathematical formulation
\usepackage{mathtools}
\usepackage{amsmath}
\usepackage{amssymb}
\usepackage[nameinlink]{cleveref}

% please place your own definitions here and don't use \def but \newcommand
\newcommand{\isep}{\mathrel{{.}\,{.}}\nobreak} % for integer ranges
 % change hyperref autoref to capitalized
 % change hyperref autoref to capitalized
 % change hyperref autoref to capitalized
% \newcommand{}{}
%\newsavebox\CBox
%\def\textBF#1{\sbox\CBox{#1}\resizebox{\wd\CBox}{\ht\CBox}{\textbf{#1}}}
%\parindent=0pt
%\renewcommand\UrlFont{\color{blue}\rmfamily}
% Command that justifies the rest of a math equation to right.
% Used to format the formulations, which are broken using just align, as
% there are some lines where the middle column is big and the last column
% is small (and vice-versa), and the align cannot avoid overlap between
% the large middle and the large last without breaking the layout.
% With this command, the middle and last columns are merged as one, and the
% inequation is separated from the forall with the \pushright
%\newcommand{\pushright}[0]{\hskip \textwidth minus \textwidth}
%\newcommand{\specialcell}[1]{\ifmeasuring@#1\else\omit$\displaystyle#1$\ignorespaces\fi}
% Defines the ORCID command
%\newcommand{\orcid}[1]{\hspace{2mm}\href{https://orcid.org/#1}{\textcolor[HTML]{A6CE39}{\aiOrcid}}}
%\usepackage{svg}
\newcommand{\orcid}[1]{\href{https://orcid.org/#1}{[#1]}}

% Insert the name of "your journal" with
\journalname{Mathematical Programming Computation}

\begin{document}

\title{Enhanced Formulation for Guillotine 2D Cutting Problems}
%\subtitle{Do you have a subtitle?\\ If so, write it here}

%\titlerunning{Short form of title}        % if too long for running head

\author{Henrique Becker\orcid{0000-0003-3879-2691}\\ \and Olinto Araujo\orcid{0000-0003-1136-5032}\\ \and Luciana S. Buriol\orcid{0000-0002-9598-5732}}
%\author{Henrique Becker \and Olinto Araujo \and Luciana S. Buriol}

\authorrunning{H. Becker et al.} % if too long for running head

\institute{
	Henrique Becker \at
	Federal University of Rio Grande do Sul (UFRGS), Av. Bento Gonçalves, 9500, Porto Alegre, RS, Brazil\\
	% Postgraduate on Computer Science Program
	%Tel.: +123-45-678910\\
	%Fax: +123-45-678910\\
	\email{hbecker@inf.ufrgs.br}           %  \\
	%\emph{Present address:} of F. Author  %  if needed
\and
	Olinto Araujo \at
	Federal University of Santa Maria (UFSM), Av. Roraima, 1000, Santa Maria, RS, Brazil\\
	% Postgraduate on Production Engineering Program
	%Tel.: +123-45-678910\\
	%Fax: +123-45-678910\\
	\email{olinto@ctism.ufsm.br}           %  \\
	%\emph{Present address:} of F. Author  %  if needed
\and
	Luciana S. Buriol \at
	Federal University of Rio Grande do Sul (UFRGS), Av. Bento Gonçalves, 9500, Porto Alegre, RS, Brazil\\
	% Postgraduate on Computer Science Program
	%Tel.: +123-45-678910\\
	%Fax: +123-45-678910\\
	\email{buriol@inf.ufrgs.br}           %  \\
	%\emph{Present address:} of F. Author  %  if needed
}

\date{Received: date / Accepted: date}
% The correct dates will be entered by the editor

\maketitle

\begin{abstract}
We advance the state of the art in Mixed-Integer Linear Programming (MILP) formulations for Guillotine 2D Cutting Problems by (i) adapting a previously-known reduction to our preprocessing phase and by (ii) enhancing a previous formulation by cutting down its size and symmetries.
Our focus is the Guillotine 2D Knapsack Problem with orthogonal and unrestricted cuts, constrained demand, unlimited stages, and no rotation -- however, the formulation may be adapted to many related problems.
The code is available.
Concerning the set of 59 instances used to benchmark the original formulation, and summing the statistics for all models generated, the enhanced formulation has only a small fraction of the variables and constraints of the original model (respectively, 3.07\% and 8.35\%).
The enhanced formulation also takes about 4 hours to solve all instances while the original formulation takes 12 hours to solve 53 of them (the other six runs hit a three-hour time limit each).
We integrate, to both formulations, a pricing framework proposed for the original formulation; the enhanced formulation keeps a significant advantage in this situation.
Finally, in a recently proposed set of 80 harder instances, the enhanced formulation (with and without the pricing framework) found: 22 optimal solutions for the unrestricted problem (5 already known, 17 new); 22 optimal solutions for the restricted problem (all are new and they are not the same 22 of the optimal unrestricted solutions); better lower
bounds for 25 instances; better upper bounds for 58 instances.

\keywords{
	Combinatorics
	\and Symmetry-breaking
	\and Pseudo-polynomial
	\and Formulation
}
% From: www.ams.org/msc
% 68R05 Combinatorics in computer science
% 68U99 Computing methodologies and applications -- General
% 05D99 Combinatorics -- Extremal combinatorics -- General
% 52B99 Convex and discrete geometry -- Polytopes and polyhedra -- General
\subclass{68R05 \and 68U99 \and 05D99 \and 52B99}
\end{abstract}

\section{Introduction}

The problem we focus on this work is the Guillotine 2D Knapsack Problem with orthogonal (and unrestricted) cuts, constrained demand, unlimited stages, and no rotation.
We will refer to this specific variant as G2KP.
If we further qualify the G2KP, we only mean to discard the qualifiers above that directly conflict with the extra qualifiers, if any.
The G2KP is an NP-hard problem~\cite{russo:2020}.
% The following two phrases are useful for foreshadowing but ultimately redundant.
The work also focuses on obtaining optimal solutions for this problem through Mixed-Integer Linear Programming (MILP).
We propose two simple but effective enhancements regarding a state-of-the-art MILP formulation for the G2KP (which may also benefit some closely related problem variants).

\subsection{Explanation of the problem and some close variants}

An instance of the G2KP consists of: a rectangle of length~\(L\) and width~\(W\) (hereafter called \emph{original plate}); a set of rectangles~\(\bar{J}\) (also referred to as \emph{pieces}) where each rectangle~\(j \in \bar{J}\) has a length~\(l_j\), a width~\(w_j\), a profit~\(p_j\), and a demand~\(u_j\)
We assume, without loss of generality, that all such values are positive integers.

The G2KP seeks to maximise the profit of the pieces obtained by cutting the original plate.
The \emph{guillotine} qualifier means every cut always go from one side of a plate to other; a cut never stops or starts from the middle of a plate.
A consequence of this rule is that we often do not obtain the pieces directly from the original plate.
We cut the original plate into intermediary plates \(j \in J\), \(J \supseteq \bar{J}\), which we further cut following the same rule.

If we do not cut a plate further, then it is either: thrown away as trim/waste for no profit; or, if it has the same size as a piece, sold by the piece profit value.
\emph{Orthogonal cuts} are always parallel to one side of a plate (and perpendicular to the other).
Consequently, any intermediary plate~\(j\) is always a rectangle, and have a well-defined~\(l_j\) and~\(w_j\).
\emph{Unrestricted cuts} mean we are allowed to make horizontal (vertical) cuts different from the width (length) of a piece.
We will mention the G2KP with restricted cuts further in the text, as solving it exactly is a costly but high-quality primal heuristic for the G2KP.

\emph{Constrained demand} means we can sell at most~\(u_j\) copies of piece~\(j\).
The G2KP with \emph{unconstrained demand} is not NP-hard; exact algorithms of pseudo-polynomial time complexity exist~\cite{beasley:1985}.
Consequently, interesting G2KP instances have~\(u_j < \lceil L / l_j \rceil \times \lceil W / w_j \rceil \) for at least one piece~\(j\) (if not for all pieces).
\emph{Unlimited stages} means there is no limit to the number of times the guillotine switches between horizontal and vertical orientations.
In the exact \(k\)-staged G2KP, the guillotine is switched at most \(k-1\) times.
Consequently, a 2-staged G2KP has all cuts in some orientation before any cuts in the other orientation.
The non-exact \(k\)-staged G2KP adds one extra stage in which the only cuts allowed are the ones that trim plates to the size of pieces.
The \emph{no-rotation} qualifier means we never switch length and width during the cutting process; especially, we cannot sell a plate~\(j\) as a piece of length~\(w_j\) and width~\(l_j\).

The literature further distinguishes between \emph{weigthed} and \emph{unweighted} problem variants.
In the weighted variant, pieces have an arbitrary profit value, while in the unweighted variant the profit value is always equivalent to the piece area.
Consequently, the unweighted variant is equivalent to minimising waste and is a particular case of the weighted variant.
Any algorithm that solves the weighted variant (as is our case) can solve the unweighted variant by setting the piece profit values to their areas.

While our work focuses on this specific problem, the enhanced formulation we present may be readily adapted to, at least, the Guillotine 2D version of the following problems: the Cutting Stock Problem (and the Bin Packing Problem); the Strip Packing Problem; the Multiple Knapsack Problem; the Orthogonal Packing Problem; and the variant allowing rotation for all previously mentioned problems.
See~\cite{furini:2016} for more details.
We do not define or further discuss these problems or variants in this work.

\subsection{Motivation}

The G2KP and its closely related variants are of undisputable interest of the industry, especially wood, paper, metal, and glass cutting industries.
The vast and growing literature on the subject examined by~\cite{iori:2020} and by~\cite{russo:2020} is enough proof of such interest.
To pick a single recent case study see~\cite{clautiaux:2019}, which solves a unique variant of the Guillotine 2D Cutting Stock Problem for a glass factory manufacturing double-paned windows.

We focus on MILP as the solving method (instead of \emph{ad hoc} solutions) because its adaptability amplifies the value of any enhancements we obtain.
A better MILP formulation means:
a better solving procedure for the many (already mentioned) closely related problem variants;
a better continuous relaxation for computing an optimistic guess on the objective value of all these variants (some \emph{ad hoc} algorithms of the literature use MILP solvers to compute their bounds);
not only a better exact method but also a better base for heuristics or anytime procedures;
an immediate benefit from parallelisation, automatic problem decomposition, and solver-implemented heuristics;
and, finally, better ageing of the method over the years through the current trends of multiple-cores processors and ever-advancing solver performance.

\subsection{Contributions and paper outline}

The main contributions of this work are:
an enhanced MILP formulation based on a previous state-of-the-art formulation, its proof of correctness, and empirical evidence of its better performance;
a straigthforward adaptation of a previously known reduction procedure for both the original and the enhanced formulations, and empirical evidence of its positive impact on their performance;
finally, we present new upper and lower bounds, as well as optimal values, for many recently proposed hard instances from~\cite{velasco:2019}.
For such, we reimplement a state-of-the-art MILP formulation and an optional pricing procedure used by it.
This reimplementation allows us to compare both approaches fully.
All code used is available in the first author's repository ({\small\url{https://github.com/henriquebecker91/GuillotineModels.jl/tree/0.2.4}}).

We organise the rest of the paper the following way:
\autoref{sec:related_work} analyses how our work interacts with the pre-existing literature;
\autoref{sec:psn} introduces some mathematical concepts and explains the reduction we adapted from the literature;
\autoref{sec:enhanced_model} describes our enhanced formulation and briefly explains how it differs from the state-of-the-art formulation it is based on;
\autoref{sec:experimental_results} presents our experiments and the empirical results we derive from them;
\autoref{sec:conclusions} delivers our conclusions and suggests future work.

\section{Related work}
\label{sec:related_work}

We do not intend to provide a full overview of the literature, instead we:
refer to surveys; discuss only closely related works and how they interact with our contributions; and opportunely point out missing connections between related works.

Two relevant surveys have come out recently.
\cite{iori:2020} catalogues exact methods and relaxations for 2D cutting problems including guillotine problems.
\cite{russo:2020} reviews the literature of our particular problem at length -- there G2KP is referred to as Constrained 2D Cutting or C2DC.
Moreover, \cite{russo:2020}~points out three strategies employed by previous exact solving methods which cause loss of optimality.
Our work does not employ any of these three strategies.
One of these strategies is a dominance rule that is valid for the unconstrained case but not for the constrained case.
In 1972, \cite{herz:1972}~proposed a dominance rule for the G2KP with unconstrained demand based on the same principle and warned about the possibility of misusing the rule in the constrained case.

The first MILP formulation dealing with guillotine cuts and unlimited stages was proposed by~\cite{messaoud:2008} in 2008.
The problem considered by~\cite{messaoud:2008} is the Strip Packing Problem, but adapting the formulation to the knapsack variant would not change its fundamentals.
Previously, \cite{lodi:2003}~had proposed two MILP formulations for 2-staged G2KP.
As noted by~\cite{belov_thesis:2003}, modeling \(k\)-staged cuts for \(k \geq 3\) (unlimited stages included) was considered difficult at the time.
The size of most \(k\)-staged formulations is exponential on the number of stages (i.e.,~\(k\)).
The formulation of~\cite{messaoud:2008} had about \(3n^4/4\) variables and \(2n^4\) constraints (where \(n\) is the number of pieces) it also employed, according to the authors, a ``very loose linear relaxation'' due to which ``the practical interest of this formulation is still limited''.
The characterization of guillotine cuts proposed by~\cite{messaoud:2008} seems to have been simultaneously proposed by~\cite{pisinger:2007}. % This is the "Using decomposition" paper.

The first MILP formulation specifically for the G2KP was proposed by~\cite{furini:2016} in 2016.
An extended version of~\cite{furini:2016} appears in~\cite{dimitri_thesis} (a PhD thesis).
Their formulation has pseudo-polynomial size, \(O((L + W) \times L \times W)\)~variables and \(O(L \times W)\) constraints, and its relaxation provides a stronger bound than~\cite{messaoud:2008}.
It was the first formulation able to solve medium-sized instances of the literature.
Besides the formulation, \cite{furini:2016}~proposes two reductions and one pricing procedure; all of these are reimplemented by our work.
They also present and prove a theorem to assure the correctness of one of their reductions~(\emph{Cut-Position}).
A similar theorem and proof appear in~\cite{song:2010}.

In this work, we propose an enhanced formulation based on the one from \cite{furini:2016} mentioned above.
A significant advantage of our enhancement is to avoid the enumeration of any cuts after the middle of a plate.
This advantage appears in many works since~\cite{herz:1972}.
Recently, \cite{delorme:2019} adapted a formulation for the one-dimensional Cutting Stock Problem to obtain this same advantage.
However, the way \cite{delorme:2019}~changes their formulation to obtain this advantage is not the same as our approach.

The most recent MILP formulations for the G2KP come from three works by Martin et alii~\cite{martin:2020:models,martin:2020:bottom,martin:2020:top}.
% The formulations of~\cite{martin:2019} focus on the G2KP with defects, for which the formulation of~\cite{furini:2016} (and our enhanced version) cannot be straightforwardly adapted.
% The extra complexity needed to account for defects makes it unfair to compare their formulation against ours.
% The MILP formulations proposed in \cite{martin:2020:models,martin:2020:bottom,martin:2020:top} have the G2KP as their main target.
These formulations are compared against the formulation of~\cite{furini:2016}.
We base our enhanced formulation on~\cite{furini:2016} and also compare against it.
The formulations of ~\cite{martin:2020:models,martin:2020:bottom,martin:2020:top}
have a looser relaxation bound compared to~\cite{furini:2016}, but perform better than~\cite{furini:2016} in instances for which~\cite{furini:2016} has a much larger number of variables.
Considering the instances used in~\cite{furini:2016}, our enhanced formulation dominates the formulation of~\cite{furini:2016}.
Our formulation also dramatically improves the running times of instances in which the formulation of~\cite{furini:2016} performed worse than \cite{martin:2020:models,martin:2020:bottom,martin:2020:top} (e.g., the gcut1--gcut12 instances).
Consequently, while it may be interesting for completeness sake, we do not compare against the formulations proposed in~\cite{martin:2020:models,martin:2020:bottom,martin:2020:top}.

% We discuss the related works in the topic of cut position discretization in the following section.
% The topic demands more notation and connects with the reduction we adapt from the previous literature for our enhanced formulation.

% TODO: Should all words in the title be capitalized?
% TODO: Is this title ok? maybe Normalizing Plate Size?
\section{Notation, Discretization, and Plate-Size Normalization}
\label{sec:psn}

The performance of solving methods for cutting and packing problems often heavily depends on the number of (cut/packing) positions considered.
Since the seminal works of~\cite{cw:1977} and~\cite{herz:1972}, solving methods avoid considering each possible position, but instead consider only a subset necessary to guarantee optimality.
The literature includes many such subsets, which are often referred to as \emph{discretizations}.
The most common way of computing these discretizations are Dynamic Programming (DP) algorithms.
These DP algorithms usually only take a small fraction of the running time, but the size of the position subset outputted by them strongly affects the time spent by the rest of the solving method.

Both~\cite{furini:2016} and our enhanced formulation have one constraint for each attainable distinctly-sized plate and one variable for each potential cut over each of these plates.
Therefore, eliminating a single cutting position has the following effects:
\textbf{(i)} it removes one variable for each distinctly-sized plate that allowed that cutting position;
\textbf{(ii)} if that cutting position was the only way to produce some distinctly-sized plates\footnote{Note that the same cutting position, when applied to distinctly-sized plates, may generate different children.}, then it also removes the constraints associated with these plates;
\textbf{(iii)} if (ii) excludes one or more constraints/plates, then it also excludes all variables representing possible cuts over the excluded plates;
\textbf{(iv)} finally, if (iii) eliminates one or more variables/cuts, then it may trigger (ii) again (i.e., other plates stop being attainable), cyclically.

In this work, the only cut subset (discretization) considered are the canonical dissections of~\cite{herz:1972}, hereafter referred to as \emph{normal cuts} instead.
We acknowledge the existence of stricter discretizations: the raster points of~\cite{terno:1987,guntram:1966}, the regular normal patterns of~\cite{boschetti:2002} (named this way by~\cite{cote:2018}), and the Meet-in-the-Middle (MiM) of~\cite{cote:2018}.
The reasons for our choice of discretization are numerous:
it works well with the \emph{Plate-Size Normalization} procedure we describe below;
it is the same discretization employed by~\cite{furini:2016} (from which we base our enhanced formulation on);
MiM main gain is reducing the number of cut positions after the middle of a plate, which our enhanced formulation already discards anyway;
the regular normal patterns compute a distinct subset-sum for each pair of plate and piece, which we consider excessive (there may exist hundreds of thousands of intermediary plate possibilities);
finally, the raster points complicate our proofs and our \emph{Plate-Size Normalization} weakens its benefits.

The set~\(O = \{v, h\}\) denotes the cut orientation: \(v\) is vertical (parallel to width, perpendicular to length); \(h\) is horizontal (parallel to length, perpedicular to width).
Let us recall that the demand of a piece~\(i \in \bar{J}\) is denoted by~\(u_i\).
If we define the set of pieces fitting a plate~\(j\) as~\(I_j = \{i \in \bar{J} : l_i \leq l_j \land w_i \leq w_j \}\), we can define~\(N_{jo}\) (i.e., the set of the normal cuts of orientation~\(o\) over plate~\(j\)) as:

\begin{equation}
N_{jo}= \left\{
\begin{array}{lllr}
  \{q: 0 < q < l_j; & \forall i \in I_j, \exists n_i \in [0 \isep u_i], q = \sum_{i\in I_j} n_i l_i \} & \quad \text{if } o = v,\\
  \{q: 0 < q < w_j; & \forall i \in I_j, \exists n_i \in [0 \isep u_i], q = \sum_{i\in I_j} n_i w_i \} & \quad \text{if } o = h.
\end{array}\right.
\end{equation}

The sets defined above never include cuts at the plate extremities (i.e., \(0\), \(l_j\) for \(N_{jv}\), and \(w_j\) for \(N_{jh}\)).
Any of these cuts will always create (i)~a~zero-area plate and (ii)~a~copy of the plate that is being cut.
Consequently, these cuts only add symmetries and may be disregarded.

The goal of the \emph{Plate-Size Normalization} procedure we propose is to reduce the number of distinctly-sized plates considered.
Fewer distinctly-sized plates mean fewer constraints and trigger the same cascading effect described by items (ii)--(iv) above.
The property exploited by the procedure is already known and similarly exploited by~\cite{alvarez:2009} and by~\cite{dolatabadi:2012}.
We state the property as:

\begin{proposition}
\label{pro:normalization}
% Without loss of optimality, plate~\(j\) may always be replaced by plate~\(j\prime\) with \(w_{j\prime} = w_j\) but \(l_{j\prime} = max\{q : q \in N_{kv}, q \leq l_j\}\) in which \(w_k = w_j\) but \(l_k > l_j\).
Given a plate~\(j\), \(l_j\) may always be replaced by \(l^\prime_j = max\{q : q \in N_{kv}, q \leq l_j\}\) in which \(w_k = w_j\) but \(l_k > l_j\), without loss of optimality.
The analogue is valid for the width.
\end{proposition}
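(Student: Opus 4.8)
The plan is to reduce the statement to an equality of sub-problem values and to prove the only nontrivial inequality by structural induction on the guillotine cutting tree. First I would observe that, because $w_k = w_j$ and $l_k > l_j$, we have $I_j \subseteq I_k$, and that restricting $N_{kv}$ to $q \le l_j$ forces a coefficient $n_i = 0$ for every piece with $l_i > l_j$; hence
\[
\{\,q \in N_{kv} : q \le l_j\,\} = \{\,q : 0 < q \le l_j,\ q = \textstyle\sum_{i \in I_j} n_i l_i,\ n_i \in [0 \isep u_i]\,\}.
\]
In other words, $l'_j$ is exactly the largest \emph{piece-length-sum} (using pieces that fit the width $w_j$ and respecting demands) that does not exceed $l_j$; the sole purpose of referring to a larger plate $k$ is to admit the value $q = l_j$ itself, which the strict inequality in the definition of $N_{jv}$ would otherwise exclude. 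Since $l'_j \le l_j$, every cutting of $(l'_j, w_j)$ is also a cutting of $(l_j, w_j)$, so the maximum profit obtainable from $(l'_j, w_j)$ is at most that of $(l_j, w_j)$; the whole content of the proposition is the reverse inequality.

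For the reverse inequality I would prove the following lemma by induction on the cutting tree: any guillotine cutting of a plate $(l, w)$ with $l \le l_j$ and $w \le w_j$ can be rearranged, without changing the set of pieces produced (hence without changing profit), so that every piece lies within $[0, l^\ast] \times [0, w]$ for some $l^\ast \le l$ that is a demand-respecting piece-length-sum. In the base cases a leaf is either waste ($l^\ast = 0$) or a single piece ($l^\ast = l_i$, a one-term sum). For a vertical first cut at $q$, the left and right sub-patterns occupy lengths $q_L^\ast$ and $r^\ast$ by induction; sliding the (shrunk) right sub-plate leftwards so that the cut falls at $q_L^\ast$ keeps the pattern guillotine and yields occupied length $l^\ast = q_L^\ast + r^\ast \le l$, a sum of two piece-length-sums. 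For a horizontal first cut, both sub-plates keep the full length and occupy $l_T^\ast$ and $l_B^\ast$; the combined occupied length is $l^\ast = \max(l_T^\ast, l_B^\ast) \le l$, which equals one of the two summands and is therefore again a piece-length-sum. A short bookkeeping argument shows the coefficients stay within $[0 \isep u_i]$: in the vertical case the contributing pieces of the two sides are disjoint sub-multisets of the pieces of the whole pattern, and in the horizontal case the contributing pieces are those of a single side; in both cases each piece is counted no more often than it appears in the original pattern, where it appears at most $u_i$ times.

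Applying the lemma to an optimal cutting of $(l_j, w_j)$ gives a profit-preserving cutting whose occupied length $l^\ast$ is a demand-respecting piece-length-sum with $l^\ast \le l_j$; by the maximality defining $l'_j$ we get $l^\ast \le l'_j$, so the rearranged cutting already fits inside $(l'_j, w_j)$ and realises the same profit there. This establishes $\mathrm{profit}(l_j, w_j) \le \mathrm{profit}(l'_j, w_j)$ and hence equality. To see that the replacement is globally sound I would finally note that, whenever a parent plate produces $(l_j, w_j)$ through a vertical cut, that parent shares the width $w_j$ and is longer, so $l'_j$ is itself one of its normal cuts; the parent may therefore cut at $l'_j$ instead of $l_j$, discarding the strip $(\,l'_j \isep l_j\,] \times w_j$ as waste exactly as before.

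The main obstacle I expect is the horizontal-cut case together with the demand bookkeeping: one must verify that taking a maximum (rather than a sum) of occupied lengths still returns a value expressible as a legal piece-length-sum, and that the rearrangement never uses any piece more than its demand allows. The geometric sliding and shrinking step also needs care to confirm that guillotine feasibility and the full multiset of produced pieces are preserved; once these are checked the induction closes routinely.
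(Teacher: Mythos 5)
Your proof is correct, but there is no in-paper argument to compare it against: the paper explicitly declines to prove this proposition (``We do not replicate any proof here''), attributing the underlying property to earlier work~\cite{alvarez:2009,dolatabadi:2012}. What you have written is the classical left-shift / canonical-dissection argument going back to Herz, carried out as a structural induction on the guillotine tree, and it supplies exactly what the paper elides: the identification of \(\{q \in N_{kv} : q \leq l_j\}\) with the demand-respecting piece-length-sums in \((0, l_j]\) (the sole role of the longer plate \(k\) being to admit \(q = l_j\) itself), the vertical case by concatenating the two shifted children, the horizontal case by taking the maximum (which is one of the two sums, hence itself a legal sum), and the multiplicity bookkeeping that keeps every coefficient within demand. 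Two small points would make it airtight. First, when no piece fits plate \(j\), the set defining \(l^\prime_j\) is empty and the maximum is undefined; this degenerate case should be dispatched separately (the plate is pure waste and can be discarded outright). Second, your closing paragraph justifies the substitution only when plate \(j\) arises as the \emph{first} child of a \emph{vertical} cut of a same-width parent; in the enumeration, \(j\) may equally arise as a second child or from a horizontal cut (indeed the paper's remark stresses that second children are the ones that typically fail to be normalized). That paragraph is dispensable, however: the profit equality you established between \((l_j, w_j)\) and \((l^\prime_j, w_j)\) is already the full content of ``without loss of optimality,'' since the enumeration may substitute the normalized plate wherever the original would appear.
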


We do not replicate any proof here. We can then define: 

\begin{definition}
The length of a plate~\(j\) is considered normalized if, and only if, \(l_j = l^\prime_j\).
The analogue is valid for the width.
The size of a plate is normalized if, and only if, both its length and its width are normalized.
\end{definition}

The \emph{Plate-Size Normalization} procedure we propose consists only of replacing every non-size-normalized plate enumerated by their normalized counterpart.
The number of distinctly-sized plates diminishes because the procedure replaces many plates of distinct but similar dimensions by a single plate.
The only extra effort added by \emph{Plate-Size Normalization} consists of binary searches over~\(N_{ko}\) sets for each plate~\(j\).
A suitable \(N_{ko}\) set for each plate~\(j\) was already computed by the plate enumeration procedure before introducing the \emph{Plate-Size Normalization} (no extra effort required).

\begin{remark}
If a normal cut~\(q\) divides the size-normalized plate~\(j\), the first child is always size normalized, but the second child may not be size normalized.
\end{remark}

\begin{example}
Given two pieces with \(l = [5, 7]\), \(u = [2, 3]\), and a size-normalized plate of length~\(21\), a normal cut at~\(12\) creates a non-normalized second child of length~\(9\). %, respectively; though a normal cut at length~\(5, 7, 12,\) or \(14\) creates a normalized second child of length~\(14, 12, 7\) or \(5\), also respectively.
\end{example}

\begin{comment}
\subsection{Expanded example for thesis proposal}

I need an original plate and about 3~5 pieces.
Ideally the original plate should already be size-normalized, to be fair.
The smallest piece needs to be distant of one in absolute terms, but cannot be on the relatively large side, because this makes harder for an intermediary plate have many replacements.
We may use only squares, but this is kinda boring.
Probably the easiest way is to make a branch of the code in which the code enumerating plates saves which non-normalized plates were replaced by each normalized plate and outputs them.
\end{comment}

\section{Our changes to Furini's model}
\label{sec:enhanced_model}

% NEED TO DEFINE:
% Q_{jo} as the subset of the linear combinations for some plate and orientation
% in the original paper the only cuts removed are the symmetric ones, in our
% case we remove all after midplate
% we also make use of the corollary in last section to reduce the number of
% plates considered, what consequently reduces the number of variables
% * such sacrifice allows to remove some symmetries with a simpler method than redundant-cuts but, most importantly, it allows us to remove a large number of cut variables by inserting a lower number of extraction variables
% * there is a typo on the definition of 'a' at the source (say this after explaining coefficient a)

The formulation proposed in~\cite{furini:2016} is elegant: the pieces are just intermediary plates that may be sold.
Our contribution consists of changes to both the preprocessing step and to the formulation.
These changes significantly reduce the number of variables.
Differently, these changes deepen the distinction between plates and pieces and, consequently, may be regarded as sacrificing some elegance for performance.
The essentials of the formulation remain the same and, for this reason, we consider the model presented here as an enhanced model, not an entirely new model.

% TODO: should we say that this supersedes the furini original symm-breaking
% and their redundant-cut reduction?

The cut enumeration in~\cite{furini:2016} excludes some symmetric cuts; that is, if two different cuts create the same set of two child plates, then the symmetric cut in the second half of the plate may be ignored.
Differently,~\cite{nicos:1977} disregards \emph{all} cuts after the middle of the plate because of symmetry.
If~\cite{furini:2016} would do the same as~\cite{nicos:1977} it could become impossible to trim a plate to the size of a piece.
For example, if there was a piece with length larger than half the length of a plate, and such plate has no normal cut with the exact length of the needed trim, then the piece could not be extracted from the plate, even if the piece fits the plate.
The goal of our changes is to reduce the number of cuts (i.e., model variables) by getting closer to the symmetry-breaking rule used in~\cite{nicos:1977} without loss of optimality.
%First we present our changes to the formulation and the variable enumeration, then we prove the model correctness is not affected.

%Often, there are many more normal cuts in the second half of a plate than there is in the first half. % need explanation?
%Also, if all cuts that generated some plate type are disregarded, then every cut over such plate type is also disregarded.
%Taking all of this into account, the main purpose of our revised version of Furini's formulation is to improve its symmetry breaking. % TODO: This has also the effect of superseding the Redundant-Cut reduction, which EXPLAIN SUCCINTLY THE REDUNDANT CUT.

\subsection{The enhanced formulation}
\label{sec:enhanced}

Our changes to the formulation are restricted to replacing the set of integer variables~\(y_j, i \in \bar{J},\) with a new set of variables~\(e_{ij}, (i, j) \in E, E \subseteq \bar{J} \times J\), and the necessary adaptations to accomodate this change.
In the original formulation, \(y_i\) denoted the number of times a plate~\(i\) was sold as the piece~\(i\), in this case, the plate always had the exact size of the piece.
Our \emph{extraction variables}~\(e_{ij}\) denote a piece~\(i\) was extracted from plate~\(j\), which size may differ from the size of the piece.
For convenience, we also define \(E_{i*} = \{ j : \exists~(i, j) \in E \}\) and \(E_{*j} = \{i : \exists~(i, j) \in E \}\).
The set \(O = \{h, v\}\) denotes the horizontal and vertical cut orientations.
The set \(Q_{jo}\) (\(\forall j \in J, o \in O\)) denotes the set of possible cuts (or cut positions) of orientation~\(o\) over plate~\(j\).

The parameter~\(a\) is a byproduct of the plate enumeration process.
If cutting a plate~\(k \in J\) with a cut of orientation~\(o \in O\) at position~\(q \in Q_{jo}\) adds a plate~\(j \in J\) to the stock, then~\(a^o_{qkj} = 1\); otherwise~\(a^o_{qkj} = 0\).
%This parameter is needed to write the constraint that control which plates are available.
The description of this parameter in~\cite{furini:2016} has a typo, as pointed out by~\cite{martin:2020}:
``[...] there is a typo in their definition of parameter~\(a^o_{qkj}\), as the indices~\(j\) and~\(k\) seem to be exchanged.''.

In a valid solution, the value of \(x^o_{qj}\) is the number of times a plate~\(j \in J\) is cut with orientation~\(o \in O\) at position~\(q \in Q_{jo}\); while the value of~\(e_{ij}\) is the number of sold pieces of type~\(i \in \bar{J}\) that were extracted from plates of type~\(j \in J\).
The plate~\(0 \in J\) is the original plate, and it may also be in~\(\bar{J}\), as there may exist a piece of the same size as the original plate.

%\hspace*{\fill}
\begin{align}
\mbox{max.} &\sum_{(i, j) \in E} p_i e_{ij} \label{eq:objfun}\\
\mbox{s.t.} &\sum_{o \in O}\sum_{q \in Q_{jo}} x^o_{qj} + \sum_{i \in E_{*j}} e_{ij} \leq \sum_{k \in J}\sum_{o \in O}\sum_{q \in Q_{ko}} a^o_{qkj} x^o_{qk} \hspace*{0.05\textwidth} & \forall j \in J, j \neq 0,\label{eq:plates_conservation}\\
%	& \specialcell{\sum_{o \in O}\sum_{q \in Q_{jo}} x^o_{qj} \leq \sum_{k \in J}\sum_{o \in O}\sum_{q \in Q_{ko}} a^o_{qkj} x^o_{qk} \hspace*{\fill} \forall j \in J\setminus\bar{J},}\label{eq:generic_plates_conservation}\\
	& \sum_{o \in O}\sum_{q \in Q_{0o}} x^o_{q0} + \sum_{i \in E_{*0}} e_{i0} \leq 1 &,\label{eq:just_one_original_plate}\\
	& \sum_{j \in E_{i*}} e_{ij} \leq u_i & \forall i \in \bar{J},\label{eq:demand_limit}\\
% TODO: fix equation below, the forall part is too long and clashes with the long equation in the first line
	& x^o_{qj} \in \mathbb{N}^0 & \forall j \in J, o \in O, q \in Q_{jo},\label{eq:trivial_x}\\
	& e_{ij} \in \mathbb{N}^0 & \forall (i, j) \in E.\label{eq:trivial_e}
\end{align}

The objective function maximizes the profit of the extracted pieces~\eqref{eq:objfun}.
Constraint~\eqref{eq:plates_conservation} guarantees that for every plate~\(j\) that was further cut or had a piece extracted from it (left-hand side), there must be a cut making available a copy of such plate (right-hand side).
One copy of the original plate is available from the start~\eqref{eq:just_one_original_plate}.
The amount of extracted copies of some piece type must respect the demand for that piece type (a piece extracted is a piece sold)~\eqref{eq:demand_limit}.
Finally, the domain of all variables is the non-negative integers~\eqref{eq:trivial_x}-\eqref{eq:trivial_e}.

\subsection{The revised variable enumeration}
\label{sec:var_enum}

The variable enumeration described in~\cite{furini:2016} employs some rules to reduce the number of variables; they are symmetry-breaking, \emph{Cut-Position}, and \emph{Redundant-Cut}.
The two last rules are not discussed here; \cite{furini:2016}~proves their correctness and they do not conflict with the enhanced model.

The use of the \(x\)~variables does not change from the original formulation to our revised formulation -- however, the size of the enumerated set of variables changes.
Our revised enumeration does not create any variable~\(x^o_{jq}\) in which \((o = h \land q > \lceil w_j / 2 \rceil) \lor (o = v \land q > \lceil l_j / 2 \rceil)\).
%given that \(D^h_j \equiv W_j\) and \(D^v_j \equiv L_j\).

The original formulation has variables~\(y_i\), \(i \in \bar{J}\), while the revised formulation replaces them with variables~\(e_{ij}\), \((i, j) \in E\), \(E \subseteq \bar{J} \times J\).
Set~\(\bar{J} \times J\) is orders of magnitude larger than~\(\bar{J}\).
Consequently, set~\(E\) must be a small subset to avoid having a revised model with more variables than the original.
A suitable subset may be obtained by a simple rule: \((i, j) \in E\) if, and only if, packing piece~\(i\) in plate~\(j\) does not allow any other piece to be packed in~\(j\).
%The reason this restricted subset is enough to keep the model correctness is presented in next section.

%If an extra piece could be packed, then there is a normal cut that creates both a plate that may be used for this extra piece and a plate that may be used to pack~\(i\).
%So the idea here is to do the extraction as late as possible: if the piece may be extracted from a descendant, then the plate may be cut until this descendant is generated to then have the piece extracted from it.

\subsection{The proof of correctness}

The previous section presented a detailed explanation of the changes to the formulation and variable enumeration.
This section proves such changes do not affect the correctness of the model.
In~\cite{furini:2016}, only the perfect symmetries described below are removed.
Our changes may be summarized to:

\begin{enumerate}
\item There is no variable for any cut that occurs after the middle of a plate.
\item A piece may be obtained from a plate if, and only if, the piece fits the plate, and the plate cannot fit an extra piece (of any type).
\end{enumerate}

The second change alone cannot affect the model correctness.
The original formulation was even more restrictive in this aspect:
a piece could only be sold if a plate of the same dimensions existed.
In our revised formulation there will always exist an extraction variable in such case:
if a piece and plate match perfectly, there is no space for any other piece, fulfilling our only criteria for the existence of extraction variables.
Consequently, what needs to be proved is that:

\begin{theorem}
\label{the:enhanced_correctness}
Without changing the pieces obtained from a packing, we may replace any normal cut after the middle of a plate by a combination of piece extractions and cuts at the middle of a plate or before it.
\end{theorem}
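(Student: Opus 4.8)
The plan is to prove the statement by strong induction on the area $l_j \times w_j$ of the plate being cut, rewriting a given guillotine packing into one of equal piece yield that never cuts past the middle. I would look at the topmost cut of a plate $j$ and assume without loss of generality that it is vertical at position $q$, writing $\bar q = l_j - q$ for the symmetric position. If the cut is at or before the middle ($q \le \lceil l_j/2 \rceil$) there is nothing to do at this node, so I simply invoke the induction hypothesis on the two strictly smaller children. The whole difficulty therefore concentrates on $q > \lceil l_j/2 \rceil$, where the larger child $A$ has length $q$ and the smaller child $B$ has length $\bar q < l_j/2$.

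First I would dispose of the trivial symmetric case: if $\bar q$ is itself a normal cut of $j$, then cutting at $\bar q$ (which lies before the middle) yields exactly the same pair of children, only mirrored, and I recurse on both. The substance of the proof is the case $\bar q \notin N_{jv}$, where the after-middle cut has no symmetric normal counterpart. Here I split on whether $B$ can hold any piece. Let $r = \max\{p \in N_{jv} : p \le \bar q\}$ be the normalized value of $\bar q$. If $\bar q \ge \min_k l_k$, then $r$ exists with $0 < r \le \bar q < l_j/2$: I cut at $r$ (before the middle), assign the production of $B$ to the left child of length $r$ — legitimate because by Proposition~\ref{pro:normalization} the length of $B$ may be replaced by its normalization $r$ without loss, so the pieces it yields remain obtainable from the length-$r$ child — and assign the production of $A$ to the right child of length $l_j - r < l_j$, to which the induction hypothesis applies. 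If instead $\bar q < \min_k l_k$, then $B$ is pure waste and all production lives in $A$; I apply the induction hypothesis to $A$ (whose area is strictly smaller) and then lift $A$'s before-middle packing back onto $j$, the extra length $\bar q$ surviving only as a thin residual strip along the far edge.

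The extraction variables surface exactly in this lifting. Every vertical cut of $A$ stays before the middle of the longer plate $j$, and every horizontal cut is length-oblivious, so the only plates whose size changes are the leaves touching the right edge of $A$; each such leaf, originally a sold piece $i$, now sits in a plate lengthened by at most $\bar q$. The key point is that by the maximality defining $r$ (resp.\ by the hypothesis $\bar q < \min_k l_k$) this residual length is strictly smaller than the smallest piece length, so no second piece can be placed beside $i$; this is precisely the membership condition $(i, \cdot) \in E$, and I conclude by replacing the isolating cut with the extraction variable $e_{i\cdot}$. The symmetric argument in the width direction handles leaves lengthened in the other dimension.

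The step I expect to be the main obstacle is controlling these residual regions across the recursion: when $A$ is itself built from nested cuts — especially horizontal cuts that preserve the full length $q$ — a boundary leaf may already carry a residual produced by an earlier elimination, and I must ensure residuals do not accumulate past $\min_k l_k$ (resp.\ $\min_k w_k$), for otherwise the needed extraction variable would fail to exist. I would address this by strengthening the induction invariant to record, for every leaf, that its plate exceeds the contained piece by less than the smallest piece dimension in each direction, and by re-normalizing via Proposition~\ref{pro:normalization} whenever two residuals would otherwise combine, so that the extraction-validity condition is preserved at every leaf.
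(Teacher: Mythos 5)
Your overall strategy (explicit strong induction on plate area, with three cases: a symmetric normal cut exists, the second child is normalizable via Proposition~\ref{pro:normalization}, or the second child is waste and the first child's packing is lifted) parallels the paper's exhaustive per-cut case analysis, and is in some respects more formal than it. However, the step you yourself flag as the main obstacle --- residual accumulation under lifting --- is a genuine gap, and the mechanism you propose to close it (re-normalizing via Proposition~\ref{pro:normalization} whenever residuals would combine) does not work. Your key claim in the lifting step, that the enlarged leaf's residual stays below the smallest piece length, is only true when the boundary leaf has zero residual; after one level of recursion the boundary leaf is typically an extraction leaf with positive residual, and re-normalization cannot repair the accumulation, because it only shrinks a plate to the largest normal coordinate below its length, which can still leave room for a second piece. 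Concretely: take piece types of lengths $5$, $12$, $16$, all of full width and unit demand, and a plate of length $20$ (middle at $10$; normal coordinates $5,12,16,17$). The packing cuts at $16$ (after the middle; the length-$4$ second child is waste), then at $12$ inside $[0,16]$ (again after the middle; the length-$4$ second child is waste), yielding the single piece of length $12$. Your induction extracts the $12$-piece from the plate of length $16$ (residual $4<5$, so the extraction variable exists), but lifting onto the length-$20$ plate gives residual $8$: the $5$-piece now fits beside the $12$-piece, so the pair (12-piece, 20-plate) is not in $E$. Re-normalizing $20$ to $17$ still leaves $12+5\le 17$, so the needed extraction variable still does not exist. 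Your strengthened invariant is simply not maintainable by re-normalization.

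What is missing is exactly the paper's argument for its ``first child packs a single piece'' case: when a plate holding a single piece can still fit a second piece, there is a normal cut in the \emph{first half} of that plate separating the packed piece from the hypothetical second one (cut at the smaller of the two lengths, reordering the two pieces if necessary), and such cuts can be applied successively, strictly shrinking the plate, until no extra piece fits --- at which point the extraction variable exists by the very definition of $E$. In the example, this is the cut at $5$ on the length-$20$ plate, after which the $12$-piece is extracted from the length-$15$ child (since $12+5>15$). If you replace your re-normalization step with this trimming argument, applied to every boundary leaf whose extraction validity is broken by the lifting (each application strictly shrinks the leaf plate, so it terminates), your induction goes through; without it, the proof is incomplete.
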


%Both the theorem above and the proof below assume a plate cannot be cut twice.
%If a single cut is applied to a plate, then two new plates are created, and these may be further cut.
%There is no loss of generality by undertaking this assumption, it is just the difference between representing the packing by a binary tree, instead of tree with a variable number of children.

\begin{proof}
This is a proof by exhaustion. The set of all normal cuts after the middle of a plate may be split into the following cases:
\begin{enumerate}
  \item The cut has a perfect symmetry. \label{case:perfectly_symmetric}
  \item The cut does not have a perfect symmetry.
  \begin{enumerate}
    \item Its second child can fit at least one piece. \label{case:usable_second_child}
    \item Its second child cannot fit a single piece.
    \begin{enumerate}
      \item Its first child packs no pieces. \label{case:no_pieces}
      \item Its first child packs a single piece. \label{case:one_piece} % call luffy to help
      \item Its first child packs two or more pieces. \label{case:many_pieces}
    \end{enumerate}
  \end{enumerate}
\end{enumerate}

We believe to be self-evident that the union of~\cref{case:perfectly_symmetric,case:usable_second_child,case:no_pieces,case:one_piece,case:many_pieces} is equal to the set of all normal cuts after the middle of a plate. We present an individual proof for each of these cases.

\begin{description}
\item[\Cref{case:perfectly_symmetric} -- \textbf{The cut has a perfect symmetry.}]
If two distinct cuts have the same children (with the only difference being the first child of one cut is the second child of the other cut, and vice-versa), then the cuts are perfectly symmetric.
Whether a plate is the first or second child of a cut does not make any difference for the formulation or for the problem.
If the cut is in the second half of the plate, then its symmetry is in the first half of the plate.
Consequently, both cuts are interchangeable, and we may keep only the cut in the first half of the plate.
\item[\Cref{case:usable_second_child} -- \textbf{Its second child can fit at least one piece.}]
\autoref{pro:normalization} allows us to replace the second child by a size-normalized plate that can pack any demand-abiding set of pieces the original second child could pack.
The second child of a cut that happens after the middle of the plate is smaller than half a plate, and its size-normalized counterpart may only be the same size or smaller.
So the size-normalized plate could be cut as the first child by a normal cut in the first half of the plate.
Moreover, the old first child (now second child) have stayed the same size or grown (because the size-normalization of its sibling), which guarantee this is possible.

\item[\Cref{case:no_pieces} -- \textbf{Its first child packs no piece.}]
If both children of a single cut do not pack any pieces, then the cut may be safely ignored.
\item[\Cref{case:one_piece} -- \textbf{Its first child packs a single piece.}]
First, let us ignore this cut for a moment and consider the plate being cut by it (i.e., the parent plate).
The parent plate either: can fit an extra piece together with the piece the first child would pack, or cannot fit any extra pieces.
If it cannot fit any extra pieces, this fulfills our criteria for having an extraction variable, and the piece may be obtained through it.
The cut in question can then be disregarded (i.e., replaced by the use of such extraction variable).
However, if it is possible to fit another piece, then there is a normal cut in the first half of the plate that would separate the two pieces, and such cut may be used to shorten the plate.
This kind of normal cuts may successively shorten the plate until it is impossible to pack another piece, and the single piece that was originally packed in the first child may then be obtained employing an extraction variable.
\item[\Cref{case:many_pieces} -- \textbf{Its first child packs two or more pieces.}]
If the first child packs two or more pieces, but the second child cannot fit a single piece (i.e., it is waste), then the cut separating the first and second child may be omitted and any cuts separating pieces inside the first child may still be done.
If some of the plates obtained by such cuts need the trimming that was provided by the omitted cut, then these plates will be packing a single piece each, and they are already considered in~\cref{case:one_piece}.
\end{description}

Given the cases cover every cut after the middle of a plate, and each case has a proof, then follows that \autoref{the:enhanced_correctness} is correct. \qed

\end{proof}

\section{Experimental results}
\label{sec:experimental_results}

There are three formulation implementations that provide data used in our comparisons:
\emph{original} refers to the implementation presented in~\cite{furini:2016,dimitri_thesis};
\emph{faithful} refers to our reimplementation of \emph{original};
\emph{enhanced} refers to our enhanced formulation presented in~\autoref{sec:enhanced_model}.
The \emph{original} implementation was not available\footnote{
	We asked the authors of~\cite{furini:2016} for the \emph{original} implementation and Dimitri Thomopulos informed us it was not available.
}.
Consequently, all data relative to \emph{original} presented in this work comes from~\cite{dimitri_thesis}.
Both \emph{faithful} and \emph{enhanced} data were obtained by runs using the setup described in~\autoref{sec:setup}.

Each formulation may be modified by applying any combination of the following optional procedures:
\emph{priced} -- refer to the pricing procedure described in~\cite{furini:2016,dimitri_thesis};
\emph{normalized} -- the plate-size normalization procedure described in~\autoref{sec:psn};
\emph{warmed} -- the MIP models solved were warm-started with a solution found by a previous step;
\emph{Cut-Position} and \emph{Redundant-Cut} -- are reduction procedures described in~\cite{furini:2016,dimitri_thesis}, that may be enabled and disabled individually.
For each experiment described in the next sections, if we do not mention a procedure, then it is disabled.
The term \emph{restricted priced} refers to the model for the restricted version of the problem that is solved inside the pricing procedure mentioned above.
Consequently, for each run of a \emph{priced} variant, there will be a \emph{restricted priced} run with the same combination of optional procedures.
The differences between the \emph{restricted priced} and the (unrestricted) \emph{priced} models are mainly that:
(i) the \emph{restricted priced} model never has a horizontal (vertical) cut that does not match the width (length) of a piece;
(ii) the \emph{restricted priced} model is MIP-started with the solution of an heuristic (described in~\cite{furini:2016}) while the \emph{priced} model is MIP-started with the solution of the \emph{restricted priced} model;
(iii) the distinct solutions used to MIP-start the respective models are also used as the lower bound for the pricing procedure (details in~\cite{furini:2016}).

The goal of the pricing procedure is to remove unneeded variables from the model.
However, the priced model often ends up with unneeded constraints and variables due to pricing.
This effect is similar to the one described by items (ii)--(iv) in~\autoref{sec:psn}: if some variables (i.e., cuts) are removed, then some plates are never produced (i.e., some constraints just fix their variables to zero), consequently other variables/cuts become impossible, recursiverly.
The effort to remove such unnecessary variables and constraints is negligible.
The algorithm used is similar to finding the connected subgraph in the directed hypergraph defined by the variables/cuts (edges) and constraints/plates (nodes) starting from the original plate.
In \emph{priced} variants of \emph{faithful} and \emph{enhanced} this \emph{purge} procedure is done unless stated otherwise.
Our experiments will show that this \emph{purge} drastically reduces the number of variables and constraints, but has almost no effect on the running times.
Nonetheless, we encourage future comparisons to implement this \emph{purge} procedure, as it helps determine the real size of the solved models.

Each experiment fills a gap for the next experiments:
\autoref{sec:lp_method} explains the choice of LP algorithms made in all remaining experiments;
\autoref{sec:faithful_reimplementation} provides evidence that \emph{faithful} is on par with \emph{original}, allowing us to use it as a replacement;
\autoref{sec:comparison} compares \emph{faithful} to \emph{enhanced} and shows the value of our contributions (namely, the \emph{normalize} procedure and the \emph{enhanced} formulation);
\autoref{sec:new_results} applies the methods with best results in the last experiment to prove new optimal values and bounds for harder instances.

\subsection{Setup}
\label{sec:setup}

Every experiment in this work uses the following setup unless stated otherwise.
The CPU was an AMD\textsuperscript{\textregistered} Ryzen\textsuperscript{TM} 9 3900X 12-Core Processor (3.8GHz, cache: L1 -- 768KiB, L2 -- 6 MiB, L3 -- 64 MiB) and 32GiB of RAM were available (2 x Crucial Ballistix Sport Red DDR4 16GB 2.4GHz).
The operating system used was Ubuntu 20.04 LTS (Linux 5.4.0-42-generic).
Hyper-Threading was disabled.
Each run executed on a single thread, and no runs executed simultaneously.
The computer did not run any other CPU bound task during the experiments.
The exact version of the code used is available online (\url{https://github.com/henriquebecker91/GuillotineModels.jl/tree/0.2.4}), and it was run using Julia 1.4.2~\cite{julia} with JuMP 0.20.1~\cite{JuMP} and Gurobi 9.0.2~\cite{gurobi}.
The following Gurobi parameters had non-default values: \verb+Threads+~\(= 1\); \verb+Seed+~\(= 1\); \verb+MIPGap+~\(= 10^{-6}\) (to guarantee optimality); and \verb+TimeLimit+~\(= 10800\) (i.e., three hours).
The next section explains the rationale for using \verb+Method+~\(= 2\) (i.e., barrier) to solve the root node relaxation of the final built model; and \verb+Method+~\(= 1\) (i.e., dual simplex) inside pricing (if pricing is enabled).

\subsection{The choice of LP algorithm}
\label{sec:lp_method}

Both \cite{furini:2016} and \cite{dimitri_thesis} do not specify the algorithm used for solving the MILP root node relaxation and, if pricing is enabled, for solving some LP models (upper bound computation) and the MILP root node relaxation of the \emph{restricted priced} model.
As we use Gurobi, we are discussing the \verb+Method+ parameter (for LP models and MILP root node relaxations), and not the \verb+NodeMethod+ parameter (for non-root nodes).
The choice of the algorithm can drastically impact running times.
A preliminary experiment included all LP algorithms available in Gurobi.
\autoref{tab:lp_method_comparison} presents the data of the two algorithms selected for use.
They are the \emph{Dual Simplex} and the \emph{Barrier}.

The runs use the \emph{faithful} implementation, with \emph{Cut-Position} and \emph{Redundant-Cut} enabled, in its \emph{priced} (Priced PP-G2KP in~\cite{furini:2016,dimitri_thesis}) and \emph{not priced} (PP-G2KP in~\cite{furini:2016,dimitri_thesis}) variants.
For convenience, we limited the experiment to a few instances.
This subset consists of all instances for which the \emph{Complete PP-G2KP Model} finds the optimal solution within the time limit in~\cite{furini:2016} (Table 2).
If pricing is disabled, the root node relaxation contributes for the majority of the running time.
This characteristic makes them a good choice for this experiment.

\begin{table}
\caption{Comparison of LP-solving algorithms used inside solving procedure.}
\begin{tabular}{@{\extracolsep{4pt}}lrrrrrrr@{}}
\hline\hline
Instance & \multicolumn{3}{c}{Dual Simplex} & \multicolumn{3}{c}{Barrier} & DS + B \\\cline{2-4}\cline{5-7}
& N. P. & R. \% & Priced & N. P. & R. \% & Priced & Priced \\\hline
CU1 & 27.37 & 92.11 & 3.79 & 24.18 & 94.68 & 3040.82 & \textbf{3.58} \\
STS4 & 93.49 & 89.88 & 48.80 & 49.94 & 77.32 & 7851.30 & \textbf{47.75} \\
STS4s & 103.20 & 94.92 & 39.29 & 43.74 & 86.34 & 8470.41 & \textbf{38.36} \\
gcut9 & 226.68 & 72.29 & \textbf{3.92} & 51.48 & 85.77 & 2060.04 & 4.01 \\
okp1 & 51.95 & 84.18 & 38.89 & \textbf{32.41} & 67.78 & -- & 38.79 \\
okp4 & 98.25 & 93.35 & 144.30 & \textbf{72.09} & 92.31 & -- & 141.53 \\
okp5 & 178.13 & 89.89 & 252.09 & \textbf{96.38} & 67.24 & -- & 239.44 \\\hline\hline
\end{tabular}
\label{tab:lp_method_comparison}
\end{table}

In \autoref{tab:lp_method_comparison}, \emph{Dual Simplex} and \emph{Barrier} indicate the respective algorithm was used for all LPs and root node relaxations;
and \emph{DS + B} means that \emph{Dual Simplex} was used to solve all LPs inside the pricing phase and \emph{Barrier} was used to solve the root node relaxation of the final model.
The columns \emph{N. P.} (\emph{Not Priced}) and \emph{Priced} display the time to solve (in seconds) using the aforementioned variant.
The columns \emph{R.\%} refer to the per cent of the time spent by \emph{Not Priced} in the root node relaxation of the final model.

The following conclusions can be derived from \autoref{tab:lp_method_comparison}.
Using the \emph{Barrier} algorithm in the pricing phase is not viable.
This impracticality happens because the pricing phase includes an iterative variable pricing phase.
This iterative phase repeatedly adds variables to one LP model and solve it again.
The \emph{Barrier} algorithm solves every LP from scratch;
the \emph{Dual Simplex} reuses the previous basis and saves considerable effort.
However, \emph{Barrier} performs better if there is no previous base to reuse.
Consequently, the configuration chosen was \emph{Dual Simplex} for the pricing phase, and \emph{Barrier} for the root relaxation of the final model.

\subsection{Comparison of \emph{faithful} against \emph{original}}
\label{sec:faithful_reimplementation}

Without a reimplementation of \emph{original}, any comparison would need to be made directly against the data in~\cite{dimitri_thesis}.
However, such comparison would hardly be fair, as it compares across machines, solvers, and programming languages.
Also, for example, it does not allow us to assess the benefits of applying the \emph{plate-size normalization} procedure to the \emph{original} formulation.
The purpose of this section is to show that \emph{faithful} may be fairly used in place of \emph{original}.
For this purpose, \autoref{tab:faithful_reimplementation} compares the number of model variables and number of plates of the diverse model variants presented in~\cite{furini:2016,dimitri_thesis} (using the same 59 instances).
The number of enumerated plates has a strong correlation to the number of constraints in the model.
Both~\cite{furini:2016} and~\cite{dimitri_thesis} present the number of plates and not the number of constraints.
To simplify the comparison, we do the same.

The \emph{Priced PP-G2KP} runs in~\cite{furini:2016,dimitri_thesis} had three time limits of one hour to solve: the restricted model (i.e., obtaining a lower bound); the iterative variable pricing (i.e., obtaining an upper bound); the final model.
Such configuration always generates a final model.
However, it also has two drawbacks:
(i) the computer performance may define the answer given in the first two phases, affecting the size of the final model (and making it harder to make a fair comparison);
(ii) if the restricted model, or the iterated variable pricing, cannot be done in one hour, then the final model will probably hit the time limit too -- in~\cite{furini:2016}, every run that hits one of the two first time limits also hits the third time limit.
We chose to use a single three-hour time limit.

\autoref{tab:faithful_reimplementation} references the names used in~\cite{furini:2016,dimitri_thesis}.
The \emph{Complete PP-G2KP} is the formulation with all optional procedures disabled, while the \emph{PP-G2KP} mean both \emph{Cut-Position} and \emph{Redundant-Cut} are enabled.
\emph{Restricted PP-G2KP} and its priced version are solved inside \emph{Priced PP-G2KP} runs.
The \emph{original} had no \emph{purge} phase after pricing.
Consequently, for the columns that refer to \emph{original}, the last row just repeats the data of the row above.

The sum of columns \emph{T. L.} (Time Limit) and \emph{E. R.} (Early Return) gives the number of instances excluded from consideration in the respective row.
Column \emph{T. L.} has the number of instances for which \emph{faithful} reached the time limit without generating the respective model variant -- these instances are: Hchl7s, okp2, and okp3.
The column \emph{E. R.} has the number of instances for which our reimplementation found an optimal solution before generating the respective model variant\footnote{
	If the lower and upper bounds found during pricing are the same, then the optimal solution was found before generating the final model.
	The instances in which this happened for an unrestricted solution are 3s, A1s, CU1, CU2, W, cgcut1, and wang20.
	The instance A1s presented this behaviour already in the pricing of the restricted model.
}.
Columns \emph{O. \#v} and \emph{O. \#v} refer to \emph{original}.
Column \emph{O. \#v} (\emph{O. \#p}) presents the sum of variables (plates) for the instances in which \emph{faithful} generated a model.
Columns \emph{F. \%v} and \emph{F. \%p} refer to \emph{faithful}.
Column \emph{R. \%v} (\emph{R. \%p}) has the sum of variables (plates) in the generated models, as a percentage of the quantity obtained by the original implementation.

\begin{table}
\caption{Comparison of \emph{faithful} against \emph{original}.}
\begin{tabular}{lccrrrr}
\hline\hline
Variant & T. L. & E. R. & O. \#v & F. \%v & O. \#p & F. \%p\\\hline
Complete PP-G2KP & 0 & 0 & 156,553,107 & 100.00 & 1,882,693 & 100.00\\
Complete +Cut-Position & 0 & 0 & 103,503,930 & 99.99 & 1,738,263 & 100.01\\
Complete +Redundant-Cut & 0 & 0 & 121,009,381 & 109.94 & 1,882,693 & 100.00\\
PP-G2KP (CP + RC) & 0 & 0 & 74,052,541 & 120.05 & 1,738,263 & 100.01\\
Restricted PP-G2KP & 0 & 0 & 5,335,976 & 99.28 & 306,673 & 99.99\\
Priced Restricted PP-G2KP & 0 & 1 & 3,904,683 & 102.20 & 305,690 & 99.99\\
(no purge) Priced PP-G2KP & 3 & 7 & 14,619,460 & 93.74 & 1,642,382 & 100.01\\
Priced PP-G2KP & 3 & 7 & 14,619,460 & 31.92 & 1,642,382 & 25.55\\\hline\hline
\end{tabular}
\label{tab:faithful_reimplementation}
\end{table}

The following conclusions can be derived from \autoref{tab:faithful_reimplementation}.
All variants, except \emph{Priced PP-G2KP}, are within \(\pm0.01\)\% of the expected number of plates (and, consequently, of constraints).
The \emph{Complete PP-G2KP}, \emph{Complete +Cut-Position}, and \emph{Restricted PP-G2KP} are within \(\pm1\)\% of the expected number of variables.
The number of variables in both \emph{Complete +Redundant-Cut} and \emph{PP-G2KP (CP + RC)} is \(10\sim20\)\% larger than expected.
Our reimplementation of \emph{Redundant-cut} reduction seems responsible for both deviations.
However, it follows closely the description given in~\cite{dimitri_thesis}.
The number of variables and plates in \emph{Priced} variants is not entirely deterministic.
The number of variables of \emph{Priced} variants is either slightly above (\(+2\)\%) or lower (\emph{\(-6\sim68\)\%}).

For all non-\emph{priced} variants, the fraction of the running time spent in the model generation is negligible.
Consequently, the comparison presented in~\autoref{tab:faithful_reimplementation} is sufficient.
We cannot say the same for the \emph{priced} variants.
\cite{furini:2016,dimitri_thesis} does not report the size of the multiple LP models solved inside the iterative pricing (a phase of the pricing).
For instances in which \emph{original} and \emph{faithful} executed all phases of pricing and solved the final model, the \emph{original} spent 34.35\% of its time in the iterative pricing phase, while \emph{faithful} spent 61.69\%.
It is hard to pinpoint the source of this discrepancy.
One possible explanation is that, in \emph{original}, other phases took more time than they took in \emph{faithful}.
For example, \emph{faithful} uses the \emph{barrier} algorithm for the root node relaxation of the final model, which reduces the percentage of time spent in this phase.
Nevertheless, for the subset of the instances aforementioned, the total time spent by \emph{faithful} was about 13\% of the time spent by \emph{original}.
While the difference between machines and solvers does not allow us to infer much from that figure, we believe that the magnitude of the difference guarantees that we are not making a gross misrepresentation.

\subsection{Comparison of \emph{faithful} against \emph{enhanced}}
\label{sec:comparison}

The primary purpose of this section is to evaluate our contributions to the state of the art.
Our contributions are the \emph{normalize} reduction (i.e., the plate-size normalization presented in~\autoref{sec:psn}) and the \emph{enhanced} formulation (presented in \autoref{sec:enhanced}).
The state of the art consists in a formulation (\emph{Complete PP-G2KP}), two reductions (\emph{Cut-Position} and \emph{Redundant-Cut}), and a pricing procedure presented in~\cite{furini:2016,dimitri_thesis}.
In this section, we use our reimplementation of \emph{Complete PP-G2KP} named \emph{faithful} (to distinguish from the data of the \emph{original}).
We also reimplemented the reductions and the pricing procedure, but as \emph{enhanced} may also enable them, we avoid labelling these procedures as \emph{faithful} as to avoid confusion.

The \emph{faithful} and \emph{enhanced} formulations cannot be combined.
However, both allow enabling any combination of the optional procedures.
The only exception is \emph{Redundant-Cut}, which is unnecessary for \emph{enhanced} and, therefore, never applied to it.
Outside of this exception, in this section, \emph{Redundant-Cut} and \emph{Cut-Position} are always enabled.
These reductions never increase the number of variables (or constraints), cost a negligible amount of computational effort, and were already discussed in~\cite{furini:2016,dimitri_thesis}.

We also examine the effects of our \emph{purge} procedure and warm-starting the non-\emph{priced} model.
The deterministic heuristic used to MIP-start the non-\emph{priced} models is the same used in the \emph{restricted priced} model solved inside the pricing procedure.

The meaning of the columns in~\autoref{tab:contribution} follow:
\emph{T. T.} (Total Time) -- sum of the time spent in all instances including timeouts, in seconds;
\emph{\#e} (early) -- number of instances in which pricing found an optimal solution (and, consequently, did not generate a final model);
\emph{\#m} (modeled) -- number of instances that generated a final model;
\emph{\#s} (solved) -- number of solved instances;
\emph{\#b} (best) -- number of instances that the respective variant solved faster than any other variant;
\emph{S. T. T.} (Solved Total Time) -- same as Total Time but excluding runs ended by time or memory limit;
\emph{\#variables} (\emph{\#plates}) -- sum of the variables (plates) in all generated final models (see column~\emph{\#m}).
The first row (Faithful) has two runs that ended in memory exhaustion.
We count the time of these runs as they were timeouts.

\begin{table}
\rowcolors{1}{white}{gray-table-row}
\caption{Comparison of \emph{faithful} vs. \emph{enhanced} over the 59 instances used in~\cite{dimitri_thesis}.}
\begin{tabular}{lrrrrrrrr}
\hline\hline
Variant & T. T. & \#e & \#m & \#s & \#b & S. T. T. & \#variables & \#plates \\\hline
Faithful & 106,057 & -- & 59 & 53 & 0 & 41,257 & 88,901,964 & 1,738,366 \\
Enhanced & 25,538 & -- & 59 & 58 & 2 & 14,738 & 3,216,774 & 231,836 \\
F. +Normalizing & 60,078 & -- & 59 & 56 & 0 & 27,678 & 60,316,964 & 610,402 \\
E. +Normalizing & 14,169 & -- & 59 & 59 & 52 & 14,169 & 2,733,125 & 145,157 \\
F. +N. +Warming & 60,542 & -- & 59 & 56 & 0 & 28,142 & 60,316,964 & 610,402 \\
E. +N. +Warming & 9,778 & -- & 59 & 59 & 4 & 9,778 & 2,733,125 & 145,157 \\
Priced F. +N. +W. & 49,919 & 8 & 50 & 55 & 0 & 6,719 & 3,210,857 & 174,214 \\
Priced E. +N. +W. & 9,108 & 8 & 51 & 59 & 1 & 9,108 & 600,778 & 64,904 \\
P. F. +N. +W. -Purge & 50,054 & 8 & 50 & 55 & 0 & 6,854 & 8,072,810 & 544,892 \\
P. E. +N. +W. -Purge & 9,209 & 8 & 51 & 59 & 0 & 9,209 & 1,021,526 & 134,102 \\\hline\hline
\end{tabular}
\label{tab:contribution}
\end{table}

Considering the data from~\autoref{tab:contribution} we can state that:
\begin{enumerate}
\item \emph{enhanced} solves more instances than \emph{faithful} (using at most 24\% of its time);
\item the number of variables of `Enhanced' is almost the same as `Priced F. +N. +W.';
\item between `Enhanced' and `Priced F. +N. +W.' the former has better results;
\item \emph{normalize} further reduces variables by \(14\sim32\)\% and plates by \(37\sim65\)\%;
\item MIP-starting \emph{enhanced} makes its slightly slower in 52 instances;
\item MIP-starting \emph{enhanced} saves more than one hour in the other 7 instances;
\item any benefit from MIP-start in `F. +N. +Warming' was negated by its timeouts;
\item \emph{purge} greatly reduces the model size but has almost no effect on running time;
\item the effects of applying \emph{pricing} to \emph{enhanced} are not much better than \emph{purge};
\item applying \emph{pricing} to \emph{faithful} is positive overall but loses one solved instance.
\end{enumerate}

In~\autoref{tab:time_fractions}, \emph{Time} is the sum of all time (in seconds) spent in the 47 instances that had all phases executed by all four variants considered.
These are the same 47 indicated in row \emph{Priced F. +N. +W.} of \autoref{tab:contribution}.
From the 59 instances dataset, 4 had timeout (Hchl4s, Hchl7s, okp2, and okp3), and 8 found an optimal solution inside pricing (3s, A1s, CU1, CU2, W, cgcut1, okp4, and wang20).
All remaining columns present percentages of the time spent in a specific phase:
\emph{E} -- enumeration of cuts and plates (and all reductions);
\emph{H} -- restricted heuristic used to warm-start the restricted priced model;
\emph{RP} -- restricted pricing (not including the heuristic time);
\emph{IP} -- iterative pricing;
\emph{FP} -- final pricing;
\emph{LP} -- root node relaxation of the final model;
\emph{BB} -- branch-and-bound over the final model.

\begin{table}
\rowcolors{1}{white}{gray-table-row}
\caption{Fraction of the total time spent in each step (only runs that executed all steps).}
\begin{tabular}{lrrrrrrrrr}
\hline\hline
Variant & Time & E~\% & H~\% & RP~\% & IP~\% & FP~\% & LP~\% & BB~\% \\\hline
Priced Faithful +N. +W. & 6,632 & 0.12 & 0.38 & 26.16 & 57.36 & 2.91 & 4.56 & 8.29 \\
Priced Enhanced +N. +W. & 1,178 & 0.03 & 2.18 & 50.89 & 23.66 & 0.46 & 2.70 & 19.95 \\
P. F. +N. +W. -Purge & 6,766 & 0.11 & 0.37 & 26.00 & 57.03 & 2.81 & 5.12 & 8.45 \\
P. E. +N. +W. -Purge & 1,185 & 0.03 & 2.18 & 50.70 & 23.64 & 0.46 & 2.83 & 20.09 \\\hline\hline
\end{tabular}
\label{tab:time_fractions}
\end{table}

Considering the data from~\autoref{tab:time_fractions} we can state that:
\begin{enumerate}
\item both \emph{BB} and \emph{LP} phases are slightly faster with \emph{purge} as expected;
\item both \emph{E} and \emph{H} phases are almost negligible (at most 2\% with \emph{H} in \emph{enhanced});
\item together the \emph{RP} and \emph{IP} phases account for \(74.5\sim83.5\)\%;
\item \emph{RP} and \emph{IP} swap percentages between \emph{enhanced} and \emph{faithful};
\item \emph{faithful} shows some overhead in all phases strongly affected by model size.
\end{enumerate}

\subsection{Evaluating \emph{enhanced} against harder instances}
\label{sec:new_results}

The purposes of the experiment described in this section are:
(i) to show the limitations of the \emph{enhanced} formulation against more challenging instances;
(ii) to provide better bounds and new proven optimal values for such instances.

\cite{velasco:2019} proposes a set of 80 hard instances to test the limitations of their bounding procedures; we use these instances in this section.
Only two variants were executed for this experiment, the \emph{priced} and non-\emph{priced} versions of \emph{enhanced} with \emph{Cut-Position}, \emph{normalize}, and \emph{MIP-start} enabled.
We also present the results for the \emph{restricted priced} variant because it executes inside \emph{priced} (the same reductions apply to it).
\autoref{tab:velasco_summary} presents a summary of all runs, and \autoref{tab:velasco_new_results} presents the improved bounds and solved instances.

For this experiment, Gurobi was allowed to use the 12 physical cores of our machine.
Gurobi distributes the effort of the B\&B phase equally among all cores.
However, solving an LP (as a root node relaxation, or not) calls barrier, primal simplex, and dual simplex.
Each of these three uses a single thread, and Gurobi stops when the first of them finish.

\autoref{tab:velasco_summary} columns are:
\emph{C.} -- instance class (described in~\cite{velasco:2019}, 20 instances each);
\emph{Variant} -- the solving method employed;
\emph{\#m} (modeled) -- number of instances in which the model was built before timeout;
\emph{Avg. \#v} and \emph{Avg. \#p} -- the average number of variables and plates in the \emph{\#m} instances that generated a final model for the respective variant;
\emph{T. T.} (Total Time) -- sum of the time spent in all instances in seconds, including timeouts;
\emph{\#s} (solved) -- number of instances solved;
\emph{Avg. S. T.} (Avg. Solved Time) -- as total time but excludes timeouts and divides by \emph{\#s}.
Averages were used instead of simple sums because the very different number of generated and solved models made the sums misleading.

\begin{table}
\caption{Summary table for the instances proposed in~\cite{velasco:2019}.}
\begin{tabular}{lrrrrrrr}
\hline\hline
C. & Variant & \#m & Avg. \#v & Avg. \#p & T. T. & \#s & Avg. S. T. \\\hline
\multirow{3}{*}{1} & Not Priced & 20 & 1,787,864.55 & 22,316.50 & 172,574 & 5 & 2,114.85 \\
                   & Restricted Priced & 13 & 467,692.15 & 17,139.00 & 180,051 & 5 & 3,610.29 \\
\vspace{1.5mm}     & Priced & 5 & 264,315.80 & 11,978.40 & 196,733 & 3 & 4,377.77 \\
\multirow{3}{*}{2} & Not Priced & 20 & 1,533,490.70 & 18,638.50 & 167,973 & 5 & 1,194.68 \\
                   & Restricted Priced & 20 & 453,159.70 & 18,638.30 & 155,184 & 8 & 3,198.11 \\
\vspace{1.5mm}     & Priced & 8 & 394,613.88 & 9,735.50 & 178,812 & 4 & 1,503.01 \\
\multirow{3}{*}{3} & Not Priced & 20 & 2,895,300.75 & 33,249.40 & 171,155 & 5 & 1,831.11 \\
                   & Restricted Priced & 10 & 431,913.00 & 15,895.80 & 174,569 & 5 & 2,513.80 \\
\vspace{1.5mm}     & Priced & 5 & 372,597.00 & 13,287.80 & 179,712 & 4 & 1,728.08 \\
\multirow{3}{*}{4} & Not Priced & 20 & 3,201,374.45 & 35,197.10 & 167,776 & 7 & 3,910.89 \\
                   & Restricted Priced & 10 & 497,802.20 & 17,011.00 & 197,047 & 2 & 1,323.65 \\
                   & Priced & 2 & 211,093.00 & 14,227.00 & 199,477 & 2 & 2,538.79 \\\hline\hline
\end{tabular}
\label{tab:velasco_summary}
\end{table}

Concerning the data from~\autoref{tab:velasco_summary}, we want to highlight some unexpected results:
(i) the total number of instances solved by the \emph{restricted priced} was slightly smaller than non-\emph{priced}, even with non-\emph{priced} solving the harder \emph{unrestricted} problem;
(ii) many runs reached time limit without solving the continuous relaxation of the \emph{restricted} model (necessary for creating \emph{restricted priced} model);
(iii) non-\emph{priced} solved more instances than \emph{priced} in all cases.
Ideally, the pricing procedure would significantly reduce the size of the model and, consequently, the root node relaxation and B\&B phases would take much less time to solve.
However, the gain in decreasing the size of the (already reduced) \emph{enhanced} model further does not seem to compensate for the cost of solving hard LPs more than once.
Also, previous sections have shown that reducing the model size does not guarantee that the running time will be reduced by the same magnitude.

The purpose of \autoref{tab:velasco_new_results} is to allow querying the exact values for specific instances.
Even so, there are some gaps to fill.
For the instances presented in \autoref{tab:velasco_new_results},
the min / mean / max gap between the heuristic lower bound and the final lower bound were: 0.38 / 18.08 / 37.03 (non-\emph{priced}); 0.68 / 20.62 / 37.29 (\emph{restricted priced}); 9.17 / 19.38 / 32.24 (\emph{priced}).
In other words, no solution, or best bound, was given by the heuristic, and most of the time, its solution was considerably improved.
For the reader convenience, we can also summarize that our experiment has:
proved 22 unrestricted optimal values (5 already proven by~\cite{velasco:2019}, confirming their results);
proved 22 restricted optimal values (in an overlapping but distinct subset of the instances);
improved lower bounds for 25 instances;
improved upper bounds for 58 instances.

\autoref{tab:velasco_new_results} groups lower and upper bounds that are valid for the unrestricted problem.
Column \emph{RP UB} (restricted priced upper bound) is kept separate as it is not a valid bound for the unrestricted problem.
Bold indicates the best unrestricted bounds for the instance.
For the same instance and variant, if the LB and the UB are the same, both values are underlined.
The sub-headers mean:
\emph{RP} -- Restricted Priced (solved inside \emph{P} runs);
\emph{P} -- Priced;
\emph{NP} -- Not Priced;
\emph{V\&U} -- obtained by Velasco and Uchoa in~\cite{velasco:2019}.

\begin{table}
\let\mc\multicolumn
\rowcolors{3}{white}{gray-table-row}
\caption{Instances solved (restricted or unrestricted) or with improved bounds.}
\begin{tabular}{lrrrrrrrr}
\hline\hline
\hiderowcolors
Instance & \mc4c{Lower Bounds for Unrestricted} & RP UB & \mc3c{Upper Bounds for Unr.} \\\cline{2-5}\cline{7-9}
 & \mc1c{RP} & \mc1c{P} & \mc1c{NP} & \mc1c{V\&U} & & \mc1c{P} & \mc1c{NP} & \mc1c{V\&U} \\\hline
\showrowcolors
P1\_100\_200\_25\_1 & \underline{\textbf{27,251}} & \underline{\textbf{27,251}} & \underline{\textbf{27,251}} & \textbf{27,251} & \underline{27,251} & \underline{\textbf{27,251}} & \underline{\textbf{27,251}} & 27,340 \\
P1\_100\_200\_25\_2 & \underline{\textbf{25,090}} & \textbf{25,090} & \textbf{25,090} & 24,870 & \underline{25,090} & 25,403 & \textbf{25,389} & 25,522 \\
P1\_100\_200\_25\_3 & \underline{\textbf{25,730}} & \textbf{25,730} & \textbf{25,730} & \textbf{25,730} & \underline{25,730} & 25,974 & \textbf{25,909} & 26,088 \\
P1\_100\_200\_25\_4 & \underline{26,732} & \underline{\textbf{26,896}} & \underline{\textbf{26,896}} & 26,769 & \underline{26,732} & \underline{\textbf{26,896}} & \underline{\textbf{26,896}} & 27,051 \\
P1\_100\_200\_25\_5 & \textbf{26,152} & -- & \textbf{26,152} & 25,772 & 26,565 & -- & \textbf{26,617} & 26,857 \\
P1\_100\_200\_50\_1 & 28,388 & -- & \underline{\textbf{28,440}} & 28,388 & 28,504 & -- & \underline{\textbf{28,440}} & 28,558 \\
P1\_100\_200\_50\_2 & \underline{\textbf{26,276}} & \underline{\textbf{26,276}} & \underline{\textbf{26,276}} & \textbf{26,276} & \underline{26,276} & \underline{\textbf{26,276}} & \underline{\textbf{26,276}} & 26,326 \\
P1\_100\_200\_50\_3 & \textbf{27,192} & -- & \textbf{27,192} & 27,165 & 27,536 & -- & \textbf{27,483} & 27,679 \\
P1\_100\_200\_50\_4 & 28,058 & -- & \textbf{28,095} & 27,977 & 28,345 & -- & \textbf{28,340} & 28,388 \\
P1\_100\_200\_50\_5 & \textbf{27,722} & -- & \underline{\textbf{27,722}} & 27,603 & 27,930 & -- & \underline{\textbf{27,722}} & 28,009 \\
P1\_100\_400\_25\_1 & 53,247 & -- & 53,008 & \textbf{53,904} & 54,540 & -- & \textbf{54,707} & 55,038 \\
P1\_100\_400\_25\_2 & -- & -- & 41,275 & \textbf{44,581} & -- & -- & \textbf{47,091} & 47,097 \\
P1\_100\_400\_25\_3 & 42,748 & -- & 46,222 & \textbf{47,455} & \textbf{\large \textasteriskcentered} & -- & \textbf{49,371} & 49,473 \\
P1\_100\_400\_25\_4 & -- & -- & 38,567 & \textbf{40,517} & -- & -- & \textbf{46,069} & 46,078 \\
P1\_100\_400\_25\_5 & 44,482 & -- & \textbf{53,220} & 53,205 & \textbf{\large \textasteriskcentered} & -- & 54,120 & \textbf{54,063} \\
P1\_100\_400\_50\_1 & -- & -- & 53,831 & \textbf{55,856} & -- & -- & \textbf{56,897} & 57,074 \\
P1\_100\_400\_50\_2 & -- & -- & 40,440 & \textbf{48,373} & -- & -- & \textbf{51,754} & 51,893 \\
P1\_100\_400\_50\_4 & -- & -- & \textbf{55,107} & 52,708 & -- & -- & \textbf{55,654} & 55,661 \\
P1\_100\_400\_50\_5 & -- & -- & \textbf{53,749} & 53,502 & -- & -- & \textbf{55,005} & 55,454 \\
P2\_200\_100\_25\_1 & \underline{\textbf{21,494}} & \underline{\textbf{21,494}} & \underline{\textbf{21,494}} & \underline{\textbf{21,494}} & \underline{21,494} & \underline{\textbf{21,494}} & \underline{\textbf{21,494}} & \underline{\textbf{21,494}} \\
P2\_200\_100\_25\_2 & \underline{25,244} & \underline{\textbf{25,413}} & \underline{\textbf{25,413}} & \textbf{25,413} & \underline{25,244} & \underline{\textbf{25,413}} & \underline{\textbf{25,413}} & 25,648 \\
P2\_200\_100\_25\_3 & \underline{25,282} & \textbf{25,397} & \textbf{25,397} & \textbf{25,397} & \underline{25,282} & \textbf{25,640} & 25,647 & 25,723 \\
P2\_200\_100\_25\_4 & 25,729 & -- & \textbf{25,734} & 25,437 & 26,181 & -- & \textbf{26,239} & 26,898 \\
P2\_200\_100\_25\_5 & \underline{26,211} & \textbf{26,413} & \underline{\textbf{26,413}} & 26,220 & \underline{26,211} & 26,728 & \underline{\textbf{26,413}} & 26,898 \\
P2\_200\_100\_50\_1 & \textbf{25,679} & -- & 25,626 & 25,627 & 26,233 & -- & \textbf{26,282} & 26,447 \\
P2\_200\_100\_50\_2 & \underline{\textbf{27,801}} & \underline{\textbf{27,801}} & \underline{\textbf{27,801}} & 27,789 & \underline{27,801} & \underline{\textbf{27,801}} & \underline{\textbf{27,801}} & 27,943 \\
P2\_200\_100\_50\_3 & \underline{27,435} & \textbf{27,453} & \textbf{27,453} & \textbf{27,453} & \underline{27,435} & 27,584 & \textbf{27,579} & 27,596 \\
P2\_200\_100\_50\_4 & 27,395 & -- & \textbf{27,439} & 27,362 & 27,668 & -- & \textbf{27,704} & 27,718 \\
P2\_200\_100\_50\_5 & \underline{\textbf{29,386}} & \underline{\textbf{29,386}} & \underline{\textbf{29,386}} & \underline{\textbf{29,386}} & \underline{29,386} & \underline{\textbf{29,386}} & \underline{\textbf{29,386}} & \underline{\textbf{29,386}} \\
P2\_400\_100\_25\_1 & 49,327 & -- & \textbf{49,947} & 49,026 & 50,218 & -- & \textbf{50,365} & 51,006 \\
P2\_400\_100\_25\_2 & 48,312 & -- & \textbf{48,542} & 47,773 & 49,268 & -- & \textbf{49,315} & 49,908 \\
P2\_400\_100\_25\_3 & \textbf{46,970} & -- & 46,860 & 45,406 & 47,113 & -- & \textbf{47,204} & 48,938 \\
P2\_400\_100\_25\_4 & \textbf{51,051} & -- & 49,847 & 49,521 & 51,526 & -- & \textbf{51,600} & 52,229 \\
P2\_400\_100\_25\_5 & \textbf{49,620} & -- & 48,832 & 47,403 & 50,440 & -- & \textbf{50,580} & 54,248 \\
P2\_400\_100\_50\_1 & \underline{54,550} & 54,550 & \textbf{54,679} & 52,890 & \underline{54,550} & 54,981 & \textbf{54,916} & 55,629 \\
P2\_400\_100\_50\_2 & \textbf{54,821} & -- & 54,768 & 53,492 & 55,183 & -- & \textbf{55,181} & 55,543 \\
P2\_400\_100\_50\_3 & 54,141 & -- & \textbf{54,747} & 54,216 & 55,537 & -- & \textbf{55,709} & 56,065 \\
P2\_400\_100\_50\_4 & 53,375 & -- & \textbf{54,240} & 48,649 & 54,857 & -- & \textbf{54,987} & 55,604 \\
P2\_400\_100\_50\_5 & \textbf{53,763} & -- & 53,541 & 50,047 & 54,893 & -- & \textbf{54,918} & 55,471 \\
P3\_150\_150\_25\_1 & \underline{29,896} & \underline{\textbf{29,989}} & \underline{\textbf{29,989}} & 29,896 & \underline{29,896} & \underline{\textbf{29,989}} & \underline{\textbf{29,989}} & 30,005 \\
P3\_150\_150\_25\_2 & \textbf{29,345} & -- & 29,196 & 29,101 & 29,906 & -- & 29,965 & \textbf{29,961} \\
P3\_150\_150\_25\_3 & \underline{\textbf{30,286}} & \underline{\textbf{30,286}} & \underline{\textbf{30,286}} & \textbf{30,286} & \underline{30,286} & \underline{\textbf{30,286}} & \underline{\textbf{30,286}} & 30,327 \\
P3\_150\_150\_25\_5 & \underline{\textbf{31,332}} & \textbf{31,332} & \textbf{31,332} & 30,924 & \underline{31,332} & 31,715 & \textbf{31,682} & 31,839 \\
P3\_150\_150\_50\_1 & \underline{31,377} & \underline{\textbf{31,701}} & \underline{\textbf{31,701}} & \textbf{31,701} & \underline{31,377} & \underline{\textbf{31,701}} & \underline{\textbf{31,701}} & 31,892 \\
P3\_150\_150\_50\_2 & 30,846 & -- & \textbf{30,884} & \textbf{30,884} & 31,110 & -- & \textbf{31,008} & 31,115 \\
P3\_150\_150\_50\_3 & \underline{32,037} & \underline{\textbf{32,121}} & \underline{\textbf{32,121}} & 32,050 & \underline{32,037} & \underline{\textbf{32,121}} & \underline{\textbf{32,121}} & 32,240 \\
P3\_150\_150\_50\_4 & \textbf{31,925} & -- & \underline{\textbf{31,925}} & \textbf{31,925} & 32,210 & -- & \underline{\textbf{31,925}} & 32,070 \\
P3\_150\_150\_50\_5 & \textbf{31,631} & -- & 31,521 & 31,448 & 31,857 & -- & \textbf{31,896} & 31,901 \\
P3\_250\_250\_25\_1 & -- & -- & 51,027 & \textbf{58,480} & -- & -- & \textbf{60,548} & 60,611 \\
P3\_250\_250\_25\_2 & -- & -- & 63,646 & \textbf{68,070} & -- & -- & \textbf{73,316} & 73,339 \\
P3\_250\_250\_50\_1 & -- & -- & 59,072 & \textbf{67,603} & -- & -- & \textbf{76,117} & 76,341 \\
P3\_250\_250\_50\_2 & -- & -- & 62,772 & \textbf{75,569} & -- & -- & \textbf{82,644} & 82,666 \\
P4\_150\_150\_25\_1 & 30,870 & -- & \underline{\textbf{30,923}} & \textbf{30,923} & 31,094 & -- & \underline{\textbf{30,923}} & 31,130 \\
P4\_150\_150\_25\_2 & 30,576 & -- & \underline{\textbf{30,687}} & 30,460 & 30,786 & -- & \underline{\textbf{30,687}} & 30,931 \\
P4\_150\_150\_25\_3 & 30,257 & -- & \underline{\textbf{30,352}} & \underline{\textbf{30,352}} & 30,501 & -- & \underline{\textbf{30,352}} & \underline{\textbf{30,352}} \\
P4\_150\_150\_25\_4 & \underline{30,055} & \underline{\textbf{30,106}} & \underline{\textbf{30,106}} & \underline{\textbf{30,106}} & \underline{30,055} & \underline{\textbf{30,106}} & \underline{\textbf{30,106}} & \underline{\textbf{30,106}} \\
P4\_150\_150\_25\_5 & \textbf{30,582} & -- & 30,102 & \textbf{30,582} & 30,952 & -- & \textbf{31,228} & 31,286 \\
P4\_150\_150\_50\_1 & \underline{\textbf{31,673}} & \underline{\textbf{31,673}} & \underline{\textbf{31,673}} & \underline{\textbf{31,673}} & \underline{31,673} & \underline{\textbf{31,673}} & \underline{\textbf{31,673}} & \underline{\textbf{31,673}} \\
P4\_150\_150\_50\_2 & 32,302 & -- & \underline{\textbf{32,317}} & \textbf{32,317} & 32,434 & -- & \underline{\textbf{32,317}} & 32,423 \\
P4\_150\_150\_50\_3 & 30,906 & -- & \textbf{30,913} & 30,882 & 31,500 & -- & \textbf{31,519} & 31,756 \\
P4\_150\_150\_50\_4 & 31,912 & -- & \underline{\textbf{31,961}} & 31,912 & 32,206 & -- & \underline{\textbf{31,961}} & 32,140 \\
P4\_150\_150\_50\_5 & \textbf{32,027} & -- & 31,845 & 31,864 & 32,331 & -- & \textbf{32,308} & 32,484 \\
P4\_250\_250\_25\_4 & -- & -- & 69,530 & \textbf{79,476} & -- & -- & \textbf{81,634} & 81,839 \\
P4\_250\_250\_50\_2 & -- & -- & 67,675 & \textbf{77,206} & -- & -- & \textbf{87,314} & 87,331 \\
P4\_250\_250\_50\_4 & -- & -- & 69,063 & \textbf{78,359} & -- & -- & \textbf{86,941} & 87,069 \\\hline\hline
\end{tabular}
\textbf{\large \textasteriskcentered} These runs hit the time limit at the very start of the upper bound computation and, consequently, they produced only large and irrelevant upper bounds, which we omit to keep the table formatting.
\label{tab:velasco_new_results}
\end{table}

\section{Conclusions}
\label{sec:conclusions}

The present work advances the state of the art on MILP formulations for the G2KP.
We improve the performance of one of the most competitive MILP formulations for the G2KP by at least one order of magnitude.
In the instance set selected by the original formulation, our enhanced formulation dominates the original formulation.
Concerning other competitive MILP formulations in the literature, we keep the advantage of tighter bounds the original formulation had over them, and greatly reduce the model size and running times for instances that these other formulations had the advantage.

In the experiments, we have already discussed some elementary inferences, for example: the limitations (and partial success) of our improved formulation against the most recent and challenging instances in the literature; and the impact on the performance caused by the LP-solving algorithm, by the specific changes we made, by MIP-starting the models, and by some procedures proposed together with the original model (i.e., pricing and some preprocessing reductions).
Here we present more general conclusions from a broader perspective.

\emph{We believe symmetry-breaking plays a significant part in the success of our enhanced formulation.}
In our experiments, we focus on the significant reduction of the model size because it is easier to measure.
However, in \autoref{sec:comparison}, by comparing formulations with and without the \emph{purge} procedure, we see that a significant reduction of the model size does not always lead to a significant reduction of running times.
In the case of the variables removed by the \emph{purge} procedure (which could never assume a nonzero value), it seems clear the solver was able to disregard them without the need of our explicit removal.
The same does not apply to the variables removed by our enhanced model, which could assume nonzero values and compose symmetric solutions.
A single extraction variable may replace many distinct sequences of cuts that would extract the same piece from the same slightly-larger plate.
We also believe our results suggest that clever dominance rules may considerably improve pseudo-polynomial models (which often have tight bounds but large formulations) before resorting to more complicated techniques (as the pricing procedure proposed in~\cite{furini:2016} or column generation techniques)

\emph{Limited parallelisation of solving LP models is becoming a bottleneck.}
Obtaining tighter bounds, even at the cost of larger model size, is often valuable.
Some recent examples of this trade-off are pseudo-polynomial models like ours, but exponential-sized models solved by column generation are a pervasive and older example of the same trade-off.
In our experiment focusing on finding new optimal solutions for hard instances, it became clear that this approach shifts computational effort from the massively parallelisable B\&B phase to the almost serial root node relaxation phase.
This effect postpones finding the first primal solution and diminishes the value in massive computer clusters.

Our suggestions for future works follow: adapt the formulation for closely related problem variants and compare to their state-of-the-art solving procedure; expand on the symmetry-breaking; search for more parallelisable ways of solving LPs; consider other frameworks besides the pricing framework of~\cite{furini:2016}.

\begin{acknowledgements}
We would like to thank Dimitri Thomopulos for informing us that the code used in~\cite{furini:2016,dimitri_thesis} was, unfortunately, not available anymore.
Also, we would like to thank Mateus Pereira Martin for sharing his partial reimplementation of the code above mentioned, while not used in this paper his code allowed us to double-check our own reimplementation.
\end{acknowledgements}

% Authors must disclose all relationships or interests that
% could have direct or potential influence or impart bias on
% the work:
%
\section*{Declarations}
\subsection*{Funding}
This study was financed in part by the Coordenação de Aperfeiçoamento de Pessoal de Nível Superior - Brasil (CAPES) - Finance Code 001
\subsection*{Conflict of interest}
The authors declare that they have no conflict of interest.
\subsection*{Availability of data and material}
The Jupyter notebooks and raw CSVs used for generating the tables in this work are available in~\url{https://github.com/henriquebecker91/phd/tree/MPC-1/latex/revised_PPG2KP/notebooks}.
\subsection*{Code availability}
The code, in the specific version used, is available in~\url{https://github.com/henriquebecker91/GuillotineModels.jl/tree/0.2.4}. The code is in public domain (Unlicense).
\subsection*{Authors' contributions}
The authors' contributions may be summarised as:
\begin{enumerate}
\item an enhanced MILP formulation based on a previous state-of-the-art formulation;
\item the proof of correctness of the aforementioned enhancement;
\item empirical evidence of the better performance obtained by the enhancement;
\item the adaptation of a known reduction for our specific case;
\item empirical evidence of the gains obtained by this reduction in our case;
\item 17 new optimal solution values of hard instances (unrestricted problem);
\item 22 new optimal solution values of hard instances (restricted problem);
\item better lower bounds for 25 instances (unrestricted problem);
\item better upper bounds for 58 instances (unrestricted problem).
\end{enumerate}

% BibTeX users please use one of
%\bibliographystyle{spbasic}  % basic style, author-year citations
\bibliographystyle{spmpsci}   % mathematics and physical sciences
\bibliography{mybib} % name your BibTeX data base

\begin{thebibliography}{10}
\providecommand{\url}[1]{{#1}}
\providecommand{\urlprefix}{URL }
\expandafter\ifx\csname urlstyle\endcsname\relax
  \providecommand{\doi}[1]{DOI~\discretionary{}{}{}#1}\else
  \providecommand{\doi}{DOI~\discretionary{}{}{}\begingroup
  \urlstyle{rm}\Url}\fi

\bibitem{alvarez:2009}
Alvarez-Valdes, R., Parreño, F., Tamarit, J.M.: A branch and bound algorithm
  for the strip packing problem.
\newblock OR Spectrum \textbf{31}(2), 431--459 (2009).
\newblock \doi{10.1007/s00291-008-0128-5}.
\newblock \urlprefix\url{https://doi.org/10.1007/s00291-008-0128-5}

\bibitem{beasley:1985}
Beasley, J.E.: Algorithms for {Unconstrained} {Two}-{Dimensional} {Guillotine}
  {Cutting}.
\newblock Journal of the Operational Research Society \textbf{36}(4), 297--306
  (1985).
\newblock \doi{10.1057/jors.1985.51}.
\newblock \urlprefix\url{https://doi.org/10.1057/jors.1985.51}

\bibitem{belov_thesis:2003}
Belov, G.: Problems, models and algorithms in one- and two-dimensional cutting
  (2003)

\bibitem{messaoud:2008}
Ben~Messaoud, S., Chu, C., Espinouse, M.L.: Characterization and modelling of
  guillotine constraints.
\newblock European Journal of Operational Research \textbf{191}(1), 112--126
  (2008).
\newblock \doi{10.1016/j.ejor.2007.08.029}.
\newblock
  \urlprefix\url{http://www.sciencedirect.com/science/article/pii/S0377221707009083}

\bibitem{julia}
Bezanson, J., Edelman, A., Karpinski, S., Shah, V.B.: Julia: A fresh approach
  to numerical computing.
\newblock SIAM Review \textbf{59}(1), 65--98 (2017).
\newblock \doi{10.1137/141000671}.
\newblock \urlprefix\url{https://doi.org/10.1137/141000671}

\bibitem{boschetti:2002}
Boschetti, M.A., Mingozzi, A., Hadjiconstantinou, E.: New upper bounds for the
  two‐dimensional orthogonal non‐guillotine cutting stock problem.
\newblock IMA Journal of Management Mathematics \textbf{13}(2), 95--119 (2002).
\newblock \doi{10.1093/imaman/13.2.95}.
\newblock
  \urlprefix\url{https://academic.oup.com/imaman/article/13/2/95/686191}

\bibitem{cw:1977}
Christofides, N., Whitlock, C.: An {Algorithm} for {Two}-{Dimensional}
  {Cutting} {Problems}.
\newblock Operations Research \textbf{25}(1), 30--44 (1977).
\newblock \doi{10.1287/opre.25.1.30}.
\newblock
  \urlprefix\url{https://pubsonline.informs.org/doi/abs/10.1287/opre.25.1.30}

\bibitem{nicos:1977}
Christofides, N., Whitlock, C.: An {Algorithm} for {Two}-{Dimensional}
  {Cutting} {Problems}.
\newblock Operations Research \textbf{25}(1), 30--44 (1977).
\newblock \doi{10.1287/opre.25.1.30}.
\newblock
  \urlprefix\url{https://pubsonline.informs.org/doi/abs/10.1287/opre.25.1.30}

\bibitem{clautiaux:2019}
Clautiaux, F., Sadykov, R., Vanderbeck, F., Viaud, Q.: Pattern-based diving
  heuristics for a two-dimensional guillotine cutting-stock problem with
  leftovers.
\newblock EURO Journal on Computational Optimization \textbf{7}(3), 265--297
  (2019).
\newblock \doi{10.1007/s13675-019-00113-9}.
\newblock \urlprefix\url{https://doi.org/10.1007/s13675-019-00113-9}

\bibitem{cote:2018}
Côté, J.F., Iori, M.: The {Meet}-in-the-{Middle} {Principle} for {Cutting}
  and {Packing} {Problems}.
\newblock INFORMS Journal on Computing \textbf{30}(4), 646--661 (2018).
\newblock \doi{10.1287/ijoc.2018.0806}.
\newblock
  \urlprefix\url{https://pubsonline.informs.org/doi/abs/10.1287/ijoc.2018.0806}

\bibitem{delorme:2019}
Delorme, M., Iori, M.: Enhanced {Pseudo}-polynomial {Formulations} for {Bin}
  {Packing} and {Cutting} {Stock} {Problems}.
\newblock INFORMS Journal on Computing \textbf{32}(1), 101--119 (2019).
\newblock \doi{10.1287/ijoc.2018.0880}.
\newblock
  \urlprefix\url{https://pubsonline.informs.org/doi/abs/10.1287/ijoc.2018.0880}

\bibitem{dolatabadi:2012}
Dolatabadi, M., Lodi, A., Monaci, M.: Exact algorithms for the two-dimensional
  guillotine knapsack.
\newblock Computers \& Operations Research \textbf{39}(1), 48--53 (2012).
\newblock \doi{10.1016/j.cor.2010.12.018}.
\newblock
  \urlprefix\url{http://www.sciencedirect.com/science/article/pii/S0305054811000190}

\bibitem{JuMP}
Dunning, I., Huchette, J., Lubin, M.: Jump: A modeling language for
  mathematical optimization.
\newblock SIAM Review \textbf{59}(2), 295--320 (2017).
\newblock \doi{10.1137/15M1020575}

\bibitem{furini:2016}
Furini, F., Malaguti, E., Thomopulos, D.: Modeling {Two}-{Dimensional}
  {Guillotine} {Cutting} {Problems} via {Integer} {Programming}.
\newblock INFORMS Journal on Computing \textbf{28}(4), 736--751 (2016).
\newblock \doi{10.1287/ijoc.2016.0710}.
\newblock
  \urlprefix\url{https://pubsonline.informs.org/doi/abs/10.1287/ijoc.2016.0710}

\bibitem{gurobi}
Gurobi~Optimization, L.: Gurobi optimizer reference manual (2020).
\newblock \urlprefix\url{http://www.gurobi.com}

\bibitem{herz:1972}
{Herz}, J.C.: Recursive computational procedure for two-dimensional stock
  cutting.
\newblock IBM Journal of Research and Development \textbf{16}(5), 462--469
  (1972)

\bibitem{iori:2020}
Iori, M., de~Lima, V.L., Martello, S., Miyazawa, F.K., Monaci, M.: Exact
  {Solution} {Techniques} for {Two}-dimensional {Cutting} and {Packing}.
\newblock European Journal of Operational Research p. S0377221720306111 (2020).
\newblock \doi{10.1016/j.ejor.2020.06.050}.
\newblock \urlprefix\url{http://arxiv.org/abs/2004.12619}.
\newblock ArXiv: 2004.12619

\bibitem{lodi:2003}
Lodi, A., Monaci, M.: Integer linear programming models for 2-staged
  two-dimensional {Knapsack} problems.
\newblock Mathematical Programming \textbf{94}(2), 257--278 (2003).
\newblock \doi{10.1007/s10107-002-0319-9}.
\newblock \urlprefix\url{https://doi.org/10.1007/s10107-002-0319-9}

\bibitem{martin:2020:models}
Martin, M., Birgin, E.G., Lobato, R.D., Morabito, R., Munari, P.: Models for
  the two-dimensional rectangular single large placement problem with
  guillotine cuts and constrained pattern.
\newblock International Transactions in Operational Research \textbf{27}(2),
  767--793 (2020).
\newblock \doi{10.1111/itor.12703}.
\newblock
  \urlprefix\url{https://onlinelibrary.wiley.com/doi/abs/10.1111/itor.12703}

\bibitem{martin:2020}
Martin, M., Birgin, E.G., Lobato, R.D., Morabito, R., Munari, P.: Models for
  the two-dimensional rectangular single large placement problem with
  guillotine cuts and constrained pattern.
\newblock International Transactions in Operational Research \textbf{27}(2),
  767--793 (2020).
\newblock \doi{10.1111/itor.12703}.
\newblock
  \urlprefix\url{https://onlinelibrary.wiley.com/doi/abs/10.1111/itor.12703}

\bibitem{martin:2020:bottom}
Martin, M., Morabito, R., Munari, P.: A bottom-up packing approach for modeling
  the constrained two-dimensional guillotine placement problem.
\newblock Computers \& Operations Research \textbf{115} (2020).
\newblock \doi{10.1016/j.cor.2019.104851}.
\newblock
  \urlprefix\url{http://www.sciencedirect.com/science/article/pii/S030505481930293X}

\bibitem{martin:2020:top}
Martin, M., Morabito, R., Munari, P.: A top-down cutting approach for modeling
  the constrained two- and three-dimensional guillotine cutting problems.
\newblock Journal of the Operational Research Society \textbf{0}(0), 1--15
  (2020).
\newblock \doi{10.1080/01605682.2020.1813640}.
\newblock \urlprefix\url{https://doi.org/10.1080/01605682.2020.1813640}

\bibitem{pisinger:2007}
Pisinger, D., Sigurd, M.: Using {Decomposition} {Techniques} and {Constraint}
  {Programming} for {Solving} the {Two}-{Dimensional} {Bin}-{Packing}
  {Problem}.
\newblock INFORMS Journal on Computing \textbf{19}(1), 36--51 (2007).
\newblock \doi{10.1287/ijoc.1060.0181}.
\newblock
  \urlprefix\url{https://pubsonline.informs.org/doi/abs/10.1287/ijoc.1060.0181}

\bibitem{russo:2020}
Russo, M., Boccia, M., Sforza, A., Sterle, C.: Constrained two-dimensional
  guillotine cutting problem: upper-bound review and categorization.
\newblock International Transactions in Operational Research \textbf{27}(2),
  794--834 (2020).
\newblock \doi{10.1111/itor.12687}.
\newblock
  \urlprefix\url{https://onlinelibrary.wiley.com/doi/abs/10.1111/itor.12687}

\bibitem{guntram:1966}
Scheithauer, G., Terno, J.: The g4-heuristic for the pallet loading problem.
\newblock Journal of the Operational Research Society \textbf{47}(4), 511--522
  (1996).
\newblock \doi{10.1057/jors.1996.57}.
\newblock \urlprefix\url{https://doi.org/10.1057/jors.1996.57}

\bibitem{song:2010}
Song, X., Chu, C.B., Lewis, R., Nie, Y.Y., Thompson, J.: A worst case analysis
  of a dynamic programming-based heuristic algorithm for {2D} unconstrained
  guillotine cutting.
\newblock European Journal of Operational Research \textbf{202}(2), 368--378
  (2010).
\newblock \doi{10.1016/j.ejor.2009.05.047}.
\newblock
  \urlprefix\url{http://www.sciencedirect.com/science/article/pii/S0377221709003944}

\bibitem{terno:1987}
Terno, J., Lindemann, R., Scheithauer, G.: Zuschnittprobleme und ihre
  praktische l{\"o}sung (1987)

\bibitem{dimitri_thesis}
Thomopulos, D.: Models and solutions of resource allocation problems based on
  integer linear and nonlinear programming.
\newblock Ph.D. thesis, University of Bologna (2016).
\newblock \urlprefix\url{http://amsdottorato.unibo.it/7399/}

\bibitem{velasco:2019}
Velasco, A.S., Uchoa, E.: Improved state space relaxation for constrained
  two-dimensional guillotine cutting problems.
\newblock European Journal of Operational Research \textbf{272}(1), 106--120
  (2019).
\newblock \doi{10.1016/j.ejor.2018.06.016}.
\newblock
  \urlprefix\url{http://www.sciencedirect.com/science/article/pii/S0377221718305393}

\end{thebibliography}

\end{document}